\numberwithin{equation}{section}
\theoremstyle{plain}
\newtheorem{theorem}[equation]{Theorem}
\newtheorem{proposition}[equation]{Proposition}
\newtheorem{lemma}[equation]{Lemma}
\theoremstyle{remark}
\theoremstyle{definition}
\newtheorem{definition}[equation]{Definition}
\newtheorem{example}[equation]{Example}
\newcommand{\C}{\mathbb C}
\newcommand{\Q}{\mathbb Q}
\newcommand{\Z}{\mathbb Z}
\begin{document}

\title{Singularities of Cox Rings of Fano Varieties}
\author{Morgan V Brown}
\email{mvbrown@math.berkeley.edu}

\begin{abstract}
Let $X$ be a smooth complete Fano variety over $\mathbb{C}$.  We show that the Cox ring $\bigoplus_{L\in\text{Pic}(X)}H^0(X,\mathcal{O}_X(L))$ is Gorenstein with canonical singularities.
\end{abstract}
\maketitle
\section{Introduction}
The Cox ring (or total coordinate ring) of an algebraic variety is a natural generalization of the homogeneous coordinate ring of a projective variety.  It was studied by Cox \cite{MR1299003} in the case of toric varieties, who showed that the Cox ring of a smooth toric variety is always a polynomial ring.  Cox rings have played an important role in invariant theory, specifically counterexamples to Hilbert's 14th problem and are closely related to the study of symbolic Rees algebras.  More recently, there have been applications of Cox rings to birational geometry and number theory.
\begin{definition}\label{coxring}Let $D_1,\ldots D_r$ be Weil divisors which form a $\mathbb{Z}$ basis for the torsion free part of $\text{Cl}(X)$.  
Then a Cox ring of $X$ is given by 
\[
\text{Cox}(X; D_1,\ldots, D_r)=
\bigoplus_{\langle a_1,\ldots a_r\rangle\in\mathbb{Z}^r} H^0(X,\mathcal{O}_X(a_1D_1+\ldots a_rD_r))
\subset K(X)[t_1^{\pm},\ldots t_r^{\pm}].
\] That is, $\text{Cox}(X; D_1,\ldots, D_r)$ is generated by elements of the form $ft_1^{\alpha_1}\ldots t_r^{\alpha_r}$, where $f$ is a rational functions with at worst poles of order $\alpha_i$ along $D_i$, and no other poles.
\end{definition}

We will generally assume that $\text{Cl}(X)$ has no torsion; in this case the Cox ring is independent of the generators chosen, and we refer to it as just $\text{Cox}(X)$.
Like the homogeneous coordinate ring of a smooth projective variety, the Cox ring of a smooth projective variety $X$ may have singularities.  A pair $(X,\Delta)$ consisting of a variety and an effective $\Q$-Divisor $\Delta$ is said to be log Fano if the pair is klt and $-(K_X+\Delta)$ is ample; such a pair is called log Calabi-Yau if the pair is lc and $K_X+\Delta$ is trivial.  The main theorem of this paper states that the Cox ring of a log Fano variety has log terminal singularities.  

\begin{theorem}\label{main}

Let $(X,\Delta)$ be a $\Q$-factorial log Fano pair over $\mathbb{C}$, and let $D_1,\ldots D_r$ be a basis for the torsion free part of $\text{Cl}(X)$. 

\begin{enumerate}

\item The ring $\text{Cox}(X;D_1\ldots D_r)$ is normal with log terminal singularities, and in particular is Cohen-Macaulay.

\item If $X$ is a smooth complete strict Fano variety, then $\text{Cox}(X)$ is Gorenstein with canonical singularities.
\end{enumerate}
\end{theorem}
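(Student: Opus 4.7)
The plan is to establish (1) and then deduce (2). Both rest on the characteristic space $\pi:\mathcal{X}\to X$, defined as the relative spectrum $\text{Spec}_X\bigl(\bigoplus_L \mathcal{O}_X(L)\bigr)$ of the sheaf of graded algebras underlying the Cox ring. It carries an action of the torus $T:=\Hom(\text{Cl}(X),\mathbb{G}_m)$ and, on the locally factorial locus of $X$, is a principal $T$-bundle. Since $(X,\Delta)$ is $\Q$-factorial log Fano, BCHM gives that $R:=\text{Cox}(X;D_1,\ldots,D_r)$ is finitely generated, so $\tilde X:=\text{Spec}(R)$ is a normal affine variety. Two crucial geometric facts I would use: (a) $\mathcal{X}$ embeds as an open subscheme of $\tilde X$ whose complement $B$ has codimension $\geq 2$, because the ample cone of $(X,\Delta)$ is full-dimensional in $\text{Cl}(X)\otimes\R$ and hence supplies enough global sections to make the irrelevant locus thin; and (b) smoothness of $\pi$ on the torsor locus implies $(\mathcal{X},\pi^*\Delta)$ is klt.

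For (1), fact (b) already gives that $\tilde X$ is klt on $\mathcal{X}$. To extend klt across $B$, I would pull a log resolution $f:Y\to X$ of $(X,\Delta)$ up to the characteristic space: let $\mathcal{Y}:=Y\times_X \mathcal{X}$, a smooth $T$-torsor over $Y$. Reflexivity of $\mathcal{O}_X(L)$ combined with normality of $X$ gives $\Gamma(\mathcal{Y},\mathcal{O}_{\mathcal{Y}})=R$, so $\mathcal{Y}$ maps naturally to $\tilde X$ via affinization; after compactifying one obtains a proper birational log-smooth model $g:W\to\tilde X$. The $g$-exceptional prime divisors split into two classes: (i) strict transforms of divisors in $\mathcal{Y}$ that dominate $f$-exceptional divisors on $Y$, whose discrepancies over $\tilde X$ agree with the corresponding discrepancies over $(X,\Delta)$ and so exceed $-1$; and (ii) divisors introduced during compactification that lie entirely over $B$, whose discrepancies are controlled by the $T$-action via a Koll\'ar-type affine cone formula, which yields the strict inequality $>-1$ precisely because $-(K_X+\Delta)$ is ample.

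Part (2) becomes a computation of the canonical class. For $X$ smooth strict Fano, $\text{Cl}(X)=\Pic(X)$ is free of some rank $r$, and $\mathcal{X}\to X$ is an honest principal $T$-bundle with $T\cong(\mathbb{G}_m)^r$. Relative adjunction together with the triviality of $\omega_T$ gives $\omega_\mathcal{X}\cong\pi^*\omega_X$, and the standard exact sequence for the universal torsor yields $\Pic(\mathcal{X})=\Pic(X)/\hat T=0$, so $\omega_\mathcal{X}$ is the trivial line bundle. Reflexive extension across the codim-$\geq 2$ complement $B$ then gives $\omega_{\tilde X}\cong\mathcal{O}_{\tilde X}$, so $R$ is Gorenstein. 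Combined with klt from (1), the Cartier condition on $K_R$ forces discrepancies to be integers exceeding $-1$, i.e., $\geq 0$, so $\tilde X$ has canonical singularities.

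The main obstacle is the discrepancy analysis for divisors of type (ii) in the proof of (1). One must construct a proper birational model $g:W\to\tilde X$ well-adapted to both the pulled-back log resolution $\mathcal{Y}$ and to the unstable locus $B$, and then show that the strict ampleness of $-(K_X+\Delta)$ (as opposed to mere triviality or bigness) translates to strict positivity of the discrepancy along every exceptional divisor lying over $B$. This is precisely where the log Fano hypothesis is essential; the same setup with $(X,\Delta)$ only log Calabi--Yau would deliver log canonical but not log terminal.
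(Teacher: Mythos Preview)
Your route is genuinely different from the paper's. The paper does \emph{not} work with the full characteristic space and its unstable locus all at once. Instead it argues by induction on the Picard rank $\rho_X$. The base case $\rho_X=1$ is exactly the rank-one cone computation you allude to (Theorem~\ref{basecase}). For $\rho_X>1$ the paper picks a generic Cartier divisor $L$ with $\pm L$ not effective, forms the $\mathbb{P}^1$-bundle $Y=\mathbb{P}_X(\mathcal{O}\oplus\mathcal{O}(L))$, and then runs the Mori program along a path in $N^1(Y)$ to contract the two sections $E_0,E_\infty$. The output is a small compactification $X'$ of the $\mathbb{G}_m$-bundle $Y_0$ with $\rho_{X'}=\rho_X-1$; one checks $(X',\Delta')$ is again $\Q$-factorial log Fano (Propositions~\ref{normality}, \ref{positivity}, \ref{fanosing}), applies the inductive hypothesis to $\text{Cox}(X')$, and then observes (Theorem~\ref{cover}, Lemma~\ref{etale}) that $\text{Cox}(X)$ is a cyclic cover of $\text{Cox}(X')$ \'etale in codimension two, so klt lifts.

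The payoff of the paper's architecture is precisely that it never confronts your ``main obstacle''. By peeling off one $\mathbb{G}_m$ at a time, the only cone formula ever needed is the rank-one version, and the delicate birational geometry over what would be your locus $B$ is absorbed into a finite sequence of log flips and divisorial contractions of klt pairs, for which preservation of klt is standard. In your sketch, by contrast, the step ``discrepancies of divisors over $B$ are controlled by a Koll\'ar-type affine cone formula for the $T$-action'' is asserted but not supplied; for the full torus this is not an off-the-shelf statement, and constructing a log-smooth $W\to\tilde X$ adapted simultaneously to $\mathcal{Y}$ and to the stratification of $B$ is real work that you have not done. So as written your proposal for (1) has a gap at exactly the point you identify as the obstacle.

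Your argument for (2) is fine and is essentially the content of the Hashimoto--Kurano result the paper cites rather than reproves: triviality of $\omega$ on the torsor plus reflexive extension across codimension two gives $\omega_{\tilde X}\cong\mathcal{O}_{\tilde X}$, whence Gorenstein once Cohen--Macaulay is known, and klt plus Gorenstein forces canonical.
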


Recent work of Hashimoto and Kurano \cite{1007.3327} computes the canonical modules of Cox rings.  They show that when $X$ is a normal variety whose class group is a finitely generated free abelian group and $\text{Cox}(X)$ is Noetherian, the canonical module of $\text{Cox}(X)$ is a rank one free module.  On a smooth Fano variety, the Picard group and hence the class group is a finitely generated free abelian group \cite[Prop 2.1.2]{MR1668575}.  Also log terminal singularities on a Gorenstein variety are canonical.  Hence the second statement of Theorem \ref{main} follows from the first.

Elizondo, Kurano, and Watanabe \cite{MR2058459} showed in all characteristics that the Cox ring of a normal variety with finitely generated torsion free class group is a unique factorization domain (UFD).  Popov \cite{0402154} has established in characteristic $0$ that Cox rings of smooth del Pezzo surfaces have rational singularities, and are hence Cohen-Macaulay.  Castravet and Tevelev \cite{MR2278756}  provided similar proofs for blowups of higher dimensional projective spaces at general points.

An earlier version of this paper conjectured that finitely generated Cox rings are generally Cohen-Macaulay.  I am very grateful to Yoshinori Gongyo \cite{goprivate} for pointing out a counterexample:  A very general algebraic hyper-K{\"a}hler $4$-fold will have Picard number $1$ \cite{MR2746467}, and hence have finitely generated Cox ring.  But if $X$ is hyper-K{\"a}hler, then $H^2(X,\mathcal{O}_X)$ is nonzero, so this ring cannot be Cohen-Macaulay.

In the earlier version, \ref{main} was stated only for the case $\Delta=0$.  The arguments work equally well for the log Fano case, and this is a more natural setting for many of the constructions.  Independently, Gongyo, Okawa, Sannai, and Takagi \cite{1201.1133} proved the log Fano case using reduction to positive characteristic.  They also proved a converse to \ref{main} which relates it to work of Schwede and Smith on global F-regularity \cite{MR2628797} and investigated the case of log Calabi-Yau Mori Dream Spaces.

\begin{theorem}\label{maincy}
Let $(X,\Delta)$ be a projective $\Q$-factorial log Calabi-Yau pair over $\mathbb{C}$ such that $X$ is a Mori Dream Space, and let $D_1,\ldots D_r$ be a basis for the torsion free part of $\text{Cl}(X)$.  Then the ring $\text{Cox}(X;D_1\ldots D_r)$ is normal with log canonical singularities.
\end{theorem}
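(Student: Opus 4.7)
The strategy is to parallel the proof of Theorem~\ref{main}(1) line for line, substituting ``log canonical'' for ``log terminal'' and ``log Calabi--Yau'' for ``log Fano.'' The Mori Dream Space hypothesis guarantees that $R := \text{Cox}(X; D_1,\ldots,D_r)$ is finitely generated, replacing the BCHM input used in the log Fano case; normality of $R$ is given by Elizondo--Kurano--Watanabe, and Hashimoto--Kurano establishes that $\omega_R$ is a rank one free graded $R$-module, so $Y := \text{Spec}(R)$ is $\Q$-Gorenstein.

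The heart of the argument is a comparison of discrepancies via the good quotient $\pi : Y \setminus B \to X$ by the Picard torus $T = \text{Spec}\,\C[\text{Cl}(X)]$, where $B \subset Y$ is the irrelevant locus. On the open subset where $\pi$ is a principal $T$-bundle one has a canonical bundle identity
\[
K_Y \;=\; \pi^* K_X + N,
\]
with $N$ an explicit effective divisor supported on the ``coordinate divisors'' $V(x_D) \subset Y$ attached to prime divisors $D$ of $X$. Combined with the natural $T$-invariant lift of $\Delta$, this identity expresses log discrepancies of $(Y, 0)$ along any valuation in terms of log discrepancies of $(X, \Delta)$ along the same valuation. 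Taking a log resolution $f : X' \to X$ of $(X, \Delta)$ and a compatible $T$-equivariant partial resolution $\widetilde{f} : Y' \to Y$, I transfer the bound $a(F, X, \Delta) \geq -1$, which holds for every exceptional divisor $F$ over $X$ by the lc hypothesis, into the same bound for every exceptional divisor over $Y$. In the Fano case the ampleness of $-(K_X + \Delta)$ gives strict inequality and hence klt; here the equality $K_X + \Delta \equiv 0$ yields only the borderline inequality, hence lc.

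The main obstacle is careful bookkeeping. Three points require attention: (a) ensuring $B$ has codimension $\geq 2$ in $Y$ so it contributes no new divisorial discrepancies; (b) extending the canonical bundle identity above from the principal-bundle locus to all of $Y$ without hidden torus-stabilizer corrections, where the $\Q$-factoriality of $X$ and the torsion-freeness of $\text{Cl}(X)$ are essential; and (c) matching the exceptional divisors of $\widetilde{f}$ with those of $f$ so that their discrepancies transfer correctly through the quotient formula. Once these are settled, the log Calabi--Yau hypothesis delivers exactly the lc inequality with no room to spare, which is precisely the content of the theorem.
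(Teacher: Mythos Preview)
Your approach is genuinely different from the paper's, and it has a real gap.

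\textbf{What the paper does.} The paper argues by induction on the Picard rank $\rho_X$. The base case $\rho_X=1$ is Theorem~\ref{basecase}(2): blow up the cone point of $\text{Spec}\bigoplus H^0(X,\mathcal{O}(nL))$ and compute the discrepancy of the exceptional divisor directly, using $K_X+\Delta\equiv 0$ to get discrepancy exactly $-1$. For the inductive step the paper uses Theorem~\ref{inductcy} to produce, from $(X,\Delta)$, a new $\Q$-factorial log Calabi--Yau pair $(X',\Delta')$ with $\rho_{X'}=\rho_X-1$, realized as a small compactification of a $\mathbb{G}_m$-bundle on $X$. By Theorem~\ref{cover}, $\text{Cox}(X)$ is a cyclic cover of $\text{Cox}(X')$; by Lemma~\ref{etale} this cover is \'etale in codimension~$2$; and since log canonical singularities ascend along such covers between normal varieties (Proposition~\ref{normal} supplies normality), the induction closes. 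There is no single torus-quotient comparison; the full torus action is filtered into one-parameter pieces.

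\textbf{The gap in your plan.} Your point~(a) says that because the irrelevant locus $B$ has codimension $\geq 2$, it ``contributes no new divisorial discrepancies.'' This is the crux of the error. Codimension $\geq 2$ only tells you that no \emph{prime divisor of $Y$} lies in $B$; it says nothing about \emph{exceptional divisors of a resolution of $Y$} whose center lies in $B$. In fact the essential singularity of $Y=\text{Spec}\,R$ is the vertex (the origin in the cone), which sits inside $B$, and any resolution of $Y$ produces exceptional divisors centered there. These divisors do not arise from any log resolution $f:X'\to X$ of $(X,\Delta)$, so your matching in point~(c) cannot account for them. It is precisely the discrepancy of such a divisor that detects the sign of $K_X+\Delta$: in the paper's $\rho=1$ computation the exceptional divisor over the vertex has discrepancy $m-1$ where $-K_X\equiv mL$, and $m\geq 0$ (with equality in the Calabi--Yau case) is exactly what gives $\geq -1$. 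Your sketch never confronts this computation.

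A secondary issue: on the locus where $\pi:Y\setminus B\to X$ is a principal $T$-bundle, the canonical bundle formula is simply $K_{Y\setminus B}=\pi^*K_X$, with no correction term $N$. The ``coordinate divisors $V(x_D)$'' you invoke do not enter the canonical class the way they would for, say, affine space; a generic Cox ring has no distinguished toric boundary. So the formula you propose to exploit is not the right one, and in any case it lives only on $Y\setminus B$, which again leaves the vertex untouched.
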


Gongyo, Okawa, Sannai, and Takagi \cite{1201.1133} have shown Thm \ref{maincy} for case where $X$ is a surface, and more recently, Kawamata and Okawa \cite{kopreprint} proved this in arbitrary dimension along with its converse.

We use ideas from the minimal model program as well as those of Hu and Keel \cite{MR1786494} relating MMP, Cox rings, and GIT.

Our proof proceeds by induction on the Picard rank $\rho$ of $X$.  The case of Fano varieties with $\rho=1$ follows from general work of Tomari and Watanabe on normal $\mathbb{Z}$-graded rings \cite[Thm 2.6]{MR1866376}, although for completeness we provide a proof up to cyclic covering using different techniques in section \ref{rho1}.

For $\rho>1$ we may recover $X$ from $\text{Cox}(X)$ by means of a GIT quotient by a torus $\mathbb{G}_m^{\rho_X}$.  We will filter this quotient into a series of $\mathbb{G}_m$ quotients, and show inductively that the singularities never get too bad.  This requires constructing a variety $X'$ from $X$ which is a small compactification of the total space of a nontrivial $\mathbb{G}_m$ bundle on $X$. We must show that with a suitable choice of bundle, $X'$ is also Fano with $\Q$-factorial and log terminal singularities.

In section \ref{mds}, we review facts about Mori Dream Spaces, and show that if $X$ is a Mori Dream Space and $L$ a line bundle on $X$, then the projectivized vector bundle $Y=\mathbb{P}_X(\mathcal{O}_X\oplus\mathcal{O}_X(L))$ is also a Mori Dream Space.  We also show that the Cox ring of $X$ is a cyclic cover of the Cox ring of $X'$.  The goal of section \ref{sing} is to make birational modifications to $Y$ until we arrive at a normal $\Q$-factorial log Fano pair $(X',\Delta)$ which is a small compactification of the chosen $\mathbb{G}_m$ bundle.  Standard techiniques of MMP show that $(X',\Delta)$ is klt.  In section \ref{CY} the same is done for the log Calabi-Yau case.

Finally, in section \ref{final} we address the cyclic covers introduced in sections \ref{rho1} and \ref{mds} to complete the argument.

I would like to thank my advisor David Eisenbud, as well as Yoshinori Gongyo, Sean Keel, James McKernan, and Kevin Tucker for helpful comments, discussions, and suggestions.  I would also like to thank Shinnosuke Okawa for sending me the preprints \cite{1201.1133,kopreprint}.

\section{Fano varieties with $\rho=1$}\label{rho1}

When the Picard number of $X$ is just one, the Cox ring is the ring of sections of multiples of a divisor.  This makes it easier to study than the multigraded case, and questions of whether singularities of such rings are Cohen-Macaulay, Gorenstein, or rational were studied in detail by Watanabe \cite{MR632654}.  
\begin{theorem}\label{basecase}
\begin{enumerate}
\item Let $(X,\Delta)$ be a $\Q$-factorial log Fano pair such that $\rho_X=1$, and let $L$ be the ample generator of $\text{Pic}(X)$.  Then the homogeneous coordinate ring $\bigoplus H^0(X,\mathcal{O}(nL))$ has log terminal singularities.
\item Let $(X,\Delta)$ be a projective $\Q$-factorial log Calabi-Yau pair such that $\rho_X=1$, and let $L$ be the ample generator of $\text{Pic}(X)$.  Then the homogeneous coordinate ring $\bigoplus H^0(X,\mathcal{O}(nL))$ has log canonical singularities.
\end{enumerate}
\end{theorem}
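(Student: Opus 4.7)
The plan is to realize $R = \bigoplus_{n \geq 0} H^0(X, \mathcal{O}(nL))$ as the coordinate ring of the affine cone $C_a := \operatorname{Spec} R$ over $X$ polarized by $L$, and to analyze its singularities via the standard partial resolution obtained by blowing up the vertex. Let $p \colon B \to X$ with $B = \operatorname{Tot}(\mathcal{O}_X(-L))$, and let $\pi \colon B \to C_a$ be the map that contracts the zero section $E \cong X$ to the vertex and is an isomorphism elsewhere. Since $p$ is a line bundle (hence smooth) and $\pi$ is an isomorphism away from $E$, the pair $(C_a, \Delta_{C_a})$ has the same singularities as $(X, \Delta)$ on the punctured cone, so all the work takes place at the vertex.

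The singularities at the vertex are controlled by the discrepancy of $E$. Using $K_{B/X} = p^*L$ (the relative canonical of a line bundle) and $E|_E \cong \mathcal{O}_X(-L)$ (the normal bundle of the zero section), restriction to $E$ of the crepant formula $K_B + p^*\Delta = \pi^*(K_{C_a} + \Delta_{C_a}) + a E$ yields $K_X + \Delta + L = -a L$. Since $\rho_X = 1$ and $L$ generates the torsion-free part of $\operatorname{Cl}(X)$, we may write $-(K_X+\Delta) \equiv r L$ for a unique $r \in \Q$, with $r > 0$ in the log Fano case and $r = 0$ in the log Calabi--Yau case. Thus $a = r - 1$ and the crepant formula becomes
\[
\pi^*(K_{C_a} + \Delta_{C_a}) = K_B + p^*\Delta + (1 - r) E.
\]

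Consequently $(C_a, \Delta_{C_a})$ has klt (resp.\ lc) singularities iff $(B, p^*\Delta + (1 - r) E)$ does. In the log Fano case the coefficient $1 - r$ of $E$ is strictly less than $1$; a log resolution of $(X, \Delta)$ pulls back via $p$ to a log resolution of this pair on $B$, and a short calculation shows each exceptional divisor there is a preimage of an exceptional divisor of $(X, \Delta)$ with the same (positive) log discrepancy, while $E$ itself has log discrepancy $r > 0$. So $(C_a, \Delta_{C_a})$ is klt. In the log Calabi--Yau case, $r = 0$ makes $E$ an lc place and the same calculation yields log canonical singularities.

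The remaining subtlety, which the author indicates is handled by a cyclic-cover argument in Section~\ref{final}, is that $L$ (and therefore $K_{C_a}$) is only $\Q$-Cartier in general, so the crepant identity above is merely $\Q$-linear. To make the discrepancy computation integral one passes to a cyclic cover of $R$ of degree equal to the Cartier index at the vertex, runs the blow-up argument there where the identity is unambiguous, and descends back to $R$. I expect this cover-descent to be the main technical obstacle: the cover is \'etale in codimension one away from the vertex but ramifies at the vertex, and one must verify that the ramification contributes exactly the amount needed to preserve klt/lc across the cover.
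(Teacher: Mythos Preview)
Your approach is essentially the paper's: resolve the cone by the total space of the line bundle, compute the discrepancy of the zero section $E$ to be $r-1$, and observe that all other discrepancies are inherited from a log resolution of $(X,\Delta)$. The only real difference is that the paper first reduces to $\Delta=0$ (since $(X,0)$ is no worse than $(X,\Delta)$), whereas you carry $\Delta$ through and work with a boundary $\Delta_{C_a}$ on the cone; both are fine.

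Your final paragraph, however, misidentifies the obstacle. In the statement, $L$ is the ample generator of $\operatorname{Pic}(X)$, so $L$ \emph{is} Cartier. What you actually need is that $K_{C_a}$ (or $K_{C_a}+\Delta_{C_a}$) is $\Q$-Cartier so that the crepant pullback formula makes sense. The paper handles this directly: since $\rho_X=1$ and $X$ is $\Q$-factorial, the class group of the punctured cone $C_a\setminus\{v\}$ (a $\mathbb{G}_m$-bundle over $X$) is torsion, hence $C_a$ is $\Q$-factorial. No cyclic cover is needed here. The cyclic-cover argument in Section~\ref{final} serves a different purpose---passing from the section ring of the Picard generator $L$ to the Cox ring indexed by a generator of the Weil class group---and is not part of the proof of this theorem.
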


When $X$ is strictly Fano, \ref{basecase} is a special case of a result of Tomari and Watanabe \cite[Thm 2.6]{MR1866376}.
\begin{proof}
Since $X$ is $\Q$-factorial and $\Delta$ is effective the pair $(X,0)$ does not have worse singularities than $(X,\Delta)$.  So we assume $\Delta=0$.  Let $Z=\text{Spec}\bigoplus H^0(X,\mathcal{O}(nL))$.  We must show that $Z$ is normal, $\Q$-factorial, and then compute discrepancies of a resolution of $Z$.  The variety $Z$ is normal since the ring $\bigoplus H^0(X,\mathcal{O}(nL))$ is integrally closed.  We will resolve the singularities of $Z$ in two steps. 

Let $\psi: \tilde{X}\to X$ be a resolution of singularities of $X$.    Now, let $S(L)$ be the symmetric algebra on $\mathcal{O}(L)$, and let $\mathbb{A}_X(L)=\textbf{Spec}_X(S(L))$.  This is the total space of the line bundle $L$, and has a projection map $\pi:\mathbb{A}_X(L)\to X$.  We will likewise define  $\mathbb{A}_{X'}(\psi^*L)=\textbf{Spec}_{X'}(S(\psi^*L))$.  There is a birational maps $\psi':\mathbb{A}_{X'}(\psi^*L)\to\mathbb{A}_X(L)$ which is a resolution of $\mathbb{A}_X(L)$.  There is a second birational map $f:\mathbb{A}_X(L)\to Z$ which contracts the zero section $E$ to a point.  Note that $E|_E=-L$.

Together, these two maps resolve the singularities of $Z$.  Note that $Z$ is $\Q$-factorial since the relative Picard and Weil class groups of $f$ both have rank $1$.  It remains only to check discrepancies.

The $\mathbb{A}^1$ bundle $\mathbb{A}_X(L)$ is smooth in codimension $2$ so by adjunction, $K_{\mathbb{A}_X(L)}=\pi^*K_X-E$.  On $Z$, a multiple of $K_Z$ is trivial at the cone point, so $f^*(K_Z)=\pi^*K_X-mE$, where $m$ is the nonnegative rational number satisfying $mL=-K_X$.  In the log Fano case, $m$ is strictly positive.  Thus $K_{\mathbb{A}_X(L)}=f^*K_Z+(m-1)E$.

Since none of the centers of blowups of $\psi'$ are contained in $E$, $\psi'^*E$ is the strict transform of $E$.  Thus in the log Fano case, the discrepancy of $E$ is $m-1>-1$, and in the log Calabi-Yau case the discrepancy $m-1\geq -1$.  The other discrepancies are the same as those of $\psi$, which are large enough by hypothesis.
\end{proof}
\begin{example}
Let $X$ be a hypersurface of degree $d>n$ in $\mathbb{P}^n$, where $n\geq 4$.  By the Lefschetz hyperplane theorem, $\text{Pic}(X)=\mathbb{Z}$, generated by $\mathcal{O}(1)$.  For all $m\in \mathbb{Z}$, $H^1(X,\mathcal{O}(m))=0$, so the ring $\text{Cox}(X)=\bigoplus H^0(X,\mathcal{O}(m))=k[x_0\ldots x_n]/f$, which is finitely generated.  Thus $X$ is a Mori Dream Space.  Note that our choice $d>n$ guarantees that $X$ is not a Fano variety.  However, $H^{n-1}(X,\mathcal{O}_X)\neq 0$, so by \cite[Theorem 1]{MR870733} the ring $\text{Cox}(X)$ does not have rational singularities.  This ring is however Cohen-Macaulay.
\end{example}
\section{Mori Dream Spaces}\label{mds}

Mori Dream Spaces were first introduced by Hu and Keel\cite{MR1786494}, and are so called because it is relatively easy to carry out the operations of the Mori Program on such a space.  Let $X$ be a projective variety, and $R$ a Cox ring for $X$.  The variety $X$ is called a Mori Dream Space if $X$ is $\Q$-factorial, $\text{Pic}_{\Q}(X)=N^1(X)$, and $R$ is finitely generated as a $\C$ algebra.  This is a very special property, and has many nice consequences for the birational geometry of $X$.  In particular, the nef and psuedoeffective cones of $X$ are both rational polyhedral cones, and every nef divisor on $X$ is semiample.

Note that Hu and Keel's definition of the Cox ring of $X$ \cite{MR1786494} requires the divisors $D_i$ to be Cartier. When $X$ is $\mathbb{Q}$-factorial their definition differs from ours by only a finite extension.  Thus it doesn't matter which ring we consider for questions of finite generation.  Also, when $X$ is smooth and $\text{Pic}(X)\cong \mathbb{Z}^r$, the definitions coincide.
\begin{theorem}\cite{MR2601039}
If $(X,\Delta)$ is a $\Q$-factorial log Fano pair then $X$ is a Mori Dream Space.
\end{theorem}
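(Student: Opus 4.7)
The plan is to verify in turn the three conditions in the definition of Mori Dream Space given just above the statement: $\Q$-factoriality, the equality $\text{Pic}_\Q(X) = N^1(X)$, and finite generation of the Cox ring. $\Q$-factoriality is built into the hypothesis of the theorem, so no work is needed there.

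For the equality $\text{Pic}_\Q(X) = N^1(X)$, the standard approach is to apply Kawamata--Viehweg vanishing to the klt pair $(X,\Delta)$: since $-(K_X+\Delta)$ is ample, one obtains $H^i(X,\mathcal{O}_X)=0$ for all $i>0$, and in particular $H^1(X,\mathcal{O}_X)=0$. The first Chern class map then identifies $\text{Pic}(X)$ with $N^1(X)$ up to torsion, and tensoring with $\Q$ gives the desired isomorphism. (This also shows that the class group of $X$ is finitely generated, which is needed for the Cox ring to even make sense as a $\Z^r$-graded ring.)

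The third condition, finite generation of $\text{Cox}(X;D_1,\ldots,D_r)$, is the heart of the matter and is where the cited result \cite{MR2601039} is invoked. The plan would be to reduce finite generation of the multigraded ring to that of finitely many section rings: by standard Veronese-type arguments, a $\Z^r$-graded ring is finitely generated if and only if it is finitely generated when restricted to a suitable cover of the pseudoeffective cone by rational polyhedral subcones on which a single section ring description applies. One then applies the results on existence of minimal models and finite generation of adjoint rings for klt pairs to conclude that each such section ring is finitely generated. The Mori chamber decomposition constructed via the MMP for log Fano pairs provides exactly the needed cover, and uses that $-(K_X+\Delta)$ is ample to place every divisor in the adjoint-big range via the perturbation $D \leadsto D + \epsilon(-(K_X+\Delta))$ for small $\epsilon$.

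The main obstacle is this last step: finite generation of the Cox ring is not elementary but rather encapsulates the full machinery of BCHM, including termination of flips for log Fano pairs and the non-vanishing/base point free theorems needed to show every nef divisor is semiample. The other two conditions are essentially formal consequences of vanishing theorems, so any proof plan necessarily routes its real content through the minimal model program.
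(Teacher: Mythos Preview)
The paper does not give a proof of this statement at all; it simply cites the result from \cite{MR2601039} (Birkar--Cascini--Hacon--M\textsuperscript{c}Kernan) and moves on. So there is no ``paper's own proof'' to compare against.

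Your outline is a correct high-level sketch of how the argument actually proceeds in BCHM: $\Q$-factoriality is hypothesis, the identification $\text{Pic}_{\Q}(X)=N^1(X)$ follows from Kawamata--Viehweg vanishing, and finite generation of the Cox ring is reduced (via a chamber decomposition of the effective cone) to finite generation of adjoint rings, which is the main theorem of that paper. Your remark that the perturbation $D\mapsto D+\varepsilon(-(K_X+\Delta))$ puts arbitrary divisors into the adjoint range is exactly the mechanism used. This is not a self-contained proof but an accurate roadmap, which is appropriate given that the paper itself treats the result as a black box.
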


Thus log Fano varieties give a nice class of Mori Dream Spaces to study, though these are not the only examples of Mori Dream Spaces.  Given a Mori Dream Space $X$, we want an appropriate compactification $X'$ of a $\mathbb{G}_m$ bundle on $X$.  One obvious way to get a compactification is to take a projectivized vector bundle.  Given a vector bundle $\mathcal{E}$ on $X$, let $\mathbb{P}_X(\mathcal{E})=\textbf{Proj}_X(\oplus\text{Sym}^n(\mathcal{E}))$.  We will be solely concerned with the case where $\mathcal{E}=\mathcal{O}_X\oplus\mathcal{O}_X(L)$ where $L$ is a Cartier divisor on $X$.  In this case $\mathbb{P}_X(\mathcal{O}_X\oplus\mathcal{O}_X(L))$ is a compactification of the $\mathbb{G}_m$ bundle on $X$ associated to $L$.  There are two irreducible boundary divisors, corresponding to the $0$ section of $L$ and the section at $\infty$.  To better understand the birational geometry of $Y=\mathbb{P}_X(\mathcal{O}_X\oplus\mathcal{O}_X(L))$ we calculate the Cox ring:

\begin{theorem}\label{MDS}  Let $X$ be a Mori Dream Space.  Choose $D_1,\ldots D_r$ Weil divisors generating the torsion free part of $\text{Cl}(X))$, and let $L$ be a nontrivial Cartier divisor which is in the subgroup generated by the $D_i$.  Let $Y=\mathbb{P}_X(\mathcal{O}_X(L)\oplus\mathcal{O}_X)$, which is a $\mathbb{P}^1$ bundle over $X$ with projection $\pi: Y\to X$.  Then
\begin{enumerate}
\item
The divisors $\pi^*D_i, E_\infty$, where $E_\infty$ is the section of $X$ at infinity form a $\Z$-basis for the torsion free part of $\text{Cl}(Y)$.
\item
$\text{Cox}(Y;\pi^*D_1,\ldots,\pi^*D_r,E_\infty)\cong \text{Cox}(X;D_1,\ldots,D_r)[s,t]$
\item
$Y$ is also a Mori Dream Space.
\end{enumerate}
\end{theorem}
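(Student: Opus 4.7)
The plan is to treat the three parts in order, all driven by the structure of $Y$ as a $\P^1$-bundle.

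For (1), I would use the excision sequence for class groups. The complement $Y\setminus E_\infty$ is the total space of the line bundle $\mathcal{O}_X(-L)$, an $\mathbb{A}^1$-bundle over $X$, so pullback induces $\pi^*\colon \text{Cl}(X) \xrightarrow{\sim} \text{Cl}(Y \setminus E_\infty)$. The excision sequence
\[
\Z\cdot E_\infty \to \text{Cl}(Y) \to \text{Cl}(Y\setminus E_\infty) \to 0
\]
then yields $\text{Cl}(Y) = \pi^*\text{Cl}(X) + \Z\cdot E_\infty$. Injectivity of the first map, needed for a direct-sum decomposition, follows because $E_\infty$ has intersection $1$ with every fiber of $\pi$ while divisors pulled back from $X$ have fiber-intersection $0$.

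For (2), I would identify two distinguished sections $t \in H^0(Y, \mathcal{O}_Y(E_\infty))$ and $s \in H^0(Y, \mathcal{O}_Y(E_0))$ cutting out the two natural sections of $\pi$. With the convention $\mathcal{O}_Y(1) \cong \mathcal{O}_Y(E_\infty)$ (so that $\pi_*\mathcal{O}_Y(1) = \mathcal{O}_X \oplus \mathcal{O}_X(L)$), one computes $E_0 \sim E_\infty - \pi^*L$; writing $L = \sum \ell_i D_i$ and using the basis from (1), the multidegrees are $\deg(t) = (0,\ldots,0,1)$ and $\deg(s) = (-\ell_1,\ldots,-\ell_r,1)$. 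Together with the pulled-back sections from $\text{Cox}(X)$ these generate a graded subring $\text{Cox}(X)[s,t] \hookrightarrow \text{Cox}(Y)$. To see this inclusion is surjective, I would apply the projection formula together with the standard identification $\pi_*\mathcal{O}_Y(nE_\infty) = \bigoplus_{i=0}^n \mathcal{O}_X(iL)$ for $n\geq 0$ (and vanishing for $n<0$); the multidegree-$(a_1,\ldots,a_r,n)$ piece of $\text{Cox}(Y)$ then becomes $\bigoplus_{i=0}^n H^0(X, \sum a_i D_i + iL)$, matching the monomial decomposition $\bigoplus_{j+k=n} H^0(X, \sum a_i D_i + jL)\cdot s^j t^k$ on the other side piece by piece.

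Part (3) is then essentially formal: $Y$ is $\Q$-factorial as a Zariski-locally trivial $\P^1$-bundle over a $\Q$-factorial base; the splitting $N^1_\Q(Y) = \pi^* N^1_\Q(X) \oplus \Q\cdot E_\infty$ parallel to (1) combines with $N^1_\Q(X) = \text{Pic}_\Q(X)$ to give $N^1_\Q(Y) = \text{Pic}_\Q(Y)$; and finite generation of $\text{Cox}(Y)$ is immediate from (2), since a polynomial extension of a finitely generated algebra is finitely generated. I expect the main obstacle to be the bookkeeping in (2)---straightening out the convention for $\P(\mathcal{E})$ and verifying the precise multidegrees of $s,t$---after which the remaining verifications reduce to the projection formula and the standard $\text{Sym}^n$ computation.
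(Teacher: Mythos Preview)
Your proposal is correct. Parts (1) and (3) match the paper almost verbatim; your injectivity argument in (1), via intersection with a fiber, is a harmless variant of the paper's argument that $E_\infty$ is non-torsion because its normal bundle $-L$ is.

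Part (2) is where you genuinely diverge. The paper works entirely inside the Laurent polynomial ring $K(Y)[t_1^{\pm},\ldots,t_{r+1}^{\pm}]$: it writes down an explicit rational function $y$ with $\operatorname{div}(y)=E_0-E_\infty-\pi^*L$, sets $t=t_{r+1}$ and $s=y\prod t_i^{\alpha_i}$, and then shows containment $\text{Cox}(Y)\subset\text{Cox}(X)[s,t]$ by stripping off powers of $y$ from an arbitrary homogeneous element and inducting on the number of terms. Your route via the projection formula and the splitting $\pi_*\mathcal{O}_Y(n)\cong\operatorname{Sym}^n(\mathcal{O}_X\oplus\mathcal{O}_X(L))=\bigoplus_{i=0}^n\mathcal{O}_X(iL)$ is cleaner and identifies both sides graded-piece by graded-piece in one stroke. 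The paper's approach, on the other hand, stays within the rational-function framework used throughout (Definition~\ref{coxring}, Theorem~\ref{cover}, Proposition~\ref{normal}), so the explicit description of $s$ and $t$ as elements of $K(Y)[t_i^{\pm}]$ feeds directly into those later arguments. One bookkeeping remark: your convention gives $E_0\sim E_\infty-\pi^*L$, whereas the paper has $E_0\sim E_\infty+\pi^*L$; this just reflects opposite labelings of the two sections, and your internal computation of the multidegrees of $s,t$ and the pushforward is consistent with your choice.
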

\begin{proof}
For the first statement, $Y\setminus E_\infty$ is an $\mathbb{A}^1$ bundle over $X$, so its class group is isomorphic to that of $X$, and is generated by the pullbacks of generators for $\text{Cl}(X)$.  We have an exact sequence \cite[II, Prop 6.5]{MR0463157}:
\[
\mathbb{Z}\to \text{Cl}(Y)\to \text{Cl}(X)\to 0
\]

It remains to show that the first map is injective.  This will follow since $L$ was nontorsion:  The restriction of $E_\infty$ to itself is it's normal bundle, which is $-L$.  This is nontorsion, so $E_\infty$ is not torsion in $Y$ either.

For the second statement, set $R=\text{Cox}(X;D_1,\ldots,D_r)$, $S=\text{Cox}(Y;\pi^*D_1,\ldots,\pi^*D_r,E_\infty)$.
Now, since $\mathbb{P}(\mathcal{O}(L)\oplus\mathcal{O})\cong \mathbb{P}(\mathcal{O}(-L)\oplus\mathcal{O})$ with the only difference being that the tautological invertible sheaf $\mathcal{O}(1)$ is twisted by $\pi^*(-L)$, we have that the zero section $E_0$ and the infinity section $E_\infty$ are related by $E_0\sim L+E_\infty$.  Let $y$ be the rational function with a zero of order one at $E_0$ and poles along $L$ and $E_\infty$.  

Choose an affine open set $U$ in $X$ such that $L$ is trivial on $U$, and $y$ has no poles in $U$.  Then $\mathbb{A}^1\times U$ is an affine open set of $Y$, with coordinate ring $\Gamma(U)[y]$. Thus the function field $K(Y)=K(X)(y)$.

We are ready to define the new variables $s$ and $t$ in $K(Y)[t_1^\pm,\ldots t_{r+1}^\pm]$.  The variable $t$ is defined to be $t_{r+1}$.  This is an element of $S$ since the rational function $1$ has no zeros or poles.  

Next we define the element $s$.  Let $\alpha_i$ be the coefficient of $D_i$ in $L$ for $1\leq i\leq r$, and let $\alpha_{r+1}=1$.  Then set $s=y\prod t_i^{\alpha_i}$, which is in $S$ since $y$ has poles only along $L$ and $E_\infty$.
$y$ and $t_{r+1}$ are algebraically independent over $R$, so $R[s,t]\subset S$.  It remains to be shown that every element of $S$ is in $R[s,t]$.  It suffices to check homogeneous elements under the $\mathbb{Z}^{r+1}$ grading, so let $\lambda=\frac{f(U,y)}{g(U,y)}$ be a rational function such that $\lambda\prod t_i^{\beta_i}$ is an element of $S$, where $f$ and $g$ are functions in the coordinate ring of $\mathbb{A}^1\times U$.

The rational function $\lambda$ has no poles along $E_0$, so if $g(U,y)$ is divisible by $y^n$ so is $f(x,y)$.  Assume therefore that $g(U,y)$ is not divisible by $y$.  Likewise, $\lambda$ cannot have any poles along any horizontal divisor, so we may assume $g$ is actually the pullback of a function in $\Gamma(U)$.  Now if $f(U,y)$ is divisible by $y^m$, we may divide $\lambda\prod t_i^{\beta_i}$ by $s^m$ and still have an element of $S$.  Thus we assume the lowest $y$-degree term of $\lambda$ is constant in $y$.  Note that $\frac{1}{s^m}\lambda \prod t_i^{\beta_i}$ has nonnegative degree in $t_{r+1}$, since the rational function has no zeros at $E_\infty$.  Now, the constant term of $\frac{\lambda}{y^m}$ is the restriction to this function to $E_0\cong X$, and this rational function has zeros and poles of the prescribed orders along the $D_i$, so the constant term belongs to $R[u,v]$.  If the only term was the constant term, we are finished, otherwise we may proceed by induction on the number of terms.

For the third statement we know that $Y$ has a finitely generated Cox ring.  Since $Y$ has $\Q$-factorial singularities if $X$ does, and every numerically trivial divisor on $Y$ is torsion, we conclude that $Y$ is also a Mori Dream Space.  
 
\end{proof}
Warning!  For an arbitrary vector bundle $\mathcal{E}$ on a Mori Dream Space $X$, $\mathbb{P}_X(\mathcal{E})$ might not be a Mori Dream Space!  In fact, this may fail even if $X$ is a toric variety\cite{1009.5238}.

Our goal is, given $X$, build a variety $X'$ with a similar Cox ring and a lower Picard number.  If $X$ is log Fano, then $X'$ should be also, likewise if $X$ is log Calabi-Yau $X'$ should be too.  Thus we may inductively reduce to the $\rho=1$ case.  The variety $Y=\mathbb{P}_X(\mathcal{O}_X\oplus\mathcal{O}_X(L))$ constructed is not appropriate for our purposes since the Picard number has gone up.  However for the right choice of $L$, a suitable birational modification of $Y$ will produce such an $X'$.

Let $Y_0\subset Y$ be the open set consisting of the complement of the boundary divisors $E_0$ and $E_\infty$.  Then $Y_0$ is a $\mathbb{G}_m$ bundle over $X$.  We will choose $X'$ to be a particular small compactification of $Y_0$; that is the boundary will have codimension at least $2$ in $X'$.  Thus we can canonically identify $\text{Cl}(X')\cong\text{Cl}(Y_0)$.

\begin{theorem}\label{cover}
Take $X$, $Y$, $Y_0$ as above.  Let $D_1 \ldots D_r$ be a basis for the torsion free part of $\text{Cl}(X)$, and assume $L=mD_r$ for some $m$.  Fix $\mu_m$ a primitive $m$th root of unity.  Let $\pi_0:Y_0\to X$ be the projection morphism.  Let $X'$ be a projective small compactification of $Y_0$.  

Then $\pi_0^*D_1\ldots \pi_0^*D_{r-1}$ generate the torsion free part of $\text{Cl}(X')$,
and $\text{Cox}(X';\pi_0^*D_1,\ldots \pi_0^*D_{r-1})$ is isomorphic to the $\mathbb{Z}/m$ invariant part of $\text{Cox}(X;D_1,\ldots D_r)$ under the action induced by $t_r^i\to \mu_m^it_r^i$.
\end{theorem}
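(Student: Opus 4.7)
The plan is to exploit the smallness of $X'$ as a compactification of $Y_0$: since $X' \setminus Y_0$ has codimension at least two in a normal variety, both the class group of $X'$ and each space of sections $H^0(X', \mathcal{O}(D))$ of a Weil divisor $D$ are canonically identified with the corresponding objects on $Y_0$. Thus both statements reduce to computations on the $\mathbb{G}_m$-bundle $\pi_0 \colon Y_0 \to X$.

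For statement (1), I would use the standard excision sequence for class groups together with the identification $\text{Cl}(\mathbb{A}_X(L)) \cong \text{Cl}(X)$ coming from the $\mathbb{A}^1$-bundle structure. The removed zero section has class $\pi^*[L]$ in $\text{Cl}(\mathbb{A}_X(L))$, yielding
$$\mathbb{Z} \xrightarrow{1 \mapsto [L]} \text{Cl}(X) \xrightarrow{\pi_0^*} \text{Cl}(Y_0) \to 0.$$
Using $L = mD_r$, the cokernel has torsion-free part generated by the images of $D_1, \ldots, D_{r-1}$, while $\pi_0^* D_r$ becomes $m$-torsion. Transporting along $\text{Cl}(X') \cong \text{Cl}(Y_0)$ finishes part (1).

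For part (2), I would compute each graded piece by pushforward. Because $\pi_0$ is affine with
$$\pi_{0*} \mathcal{O}_{Y_0} = \bigoplus_{k \in \mathbb{Z}} \mathcal{O}_X(kL) = \bigoplus_{k \in \mathbb{Z}} \mathcal{O}_X(km D_r),$$
the projection formula combined with the smallness of $X' \setminus Y_0$ gives
$$H^0\!\Bigl(X',\, \mathcal{O}\Bigl(\sum_{i=1}^{r-1} a_i \pi_0^* D_i\Bigr)\Bigr) \;=\; \bigoplus_{k \in \mathbb{Z}} H^0\!\Bigl(X,\, \mathcal{O}\Bigl(\sum_{i=1}^{r-1} a_i D_i + km D_r\Bigr)\Bigr).$$
Summing over $(a_1, \ldots, a_{r-1}) \in \mathbb{Z}^{r-1}$, the Cox ring of $X'$ becomes the direct sum of precisely those graded pieces of $\text{Cox}(X; D_1, \ldots, D_r)$ whose $D_r$-weight lies in $m\mathbb{Z}$, which is by definition the $\mathbb{Z}/m$-invariant subring under $t_r \mapsto \mu_m t_r$.

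The main obstacle will be checking that this identification is an isomorphism of graded rings rather than merely of graded vector spaces. I would handle this by realizing both rings inside a common overring of Laurent monomials over $K(Y_0) = K(X)(y)$, exactly as in the proof of Theorem \ref{MDS}, identifying the fiber coordinate $y$ with a normalization of $t_r^m$. Since the sheaf multiplication on $\bigoplus_k \mathcal{O}_X(kL)$ defining $\pi_{0*} \mathcal{O}_{Y_0}$ is the tautological multiplication of sections of line bundles, it agrees with the multiplication in $\text{Cox}(X)$ restricted to the $m$-divisible $D_r$-weight sublattice, which yields the desired ring isomorphism.
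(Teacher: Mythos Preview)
Your proposal is correct and follows essentially the same strategy as the paper: reduce everything to $Y_0$ via smallness, compute $\text{Cl}(Y_0)$ by excision modulo the class of $L$, and identify each graded piece of the Cox ring using the decomposition $K(Y_0)=K(X)(y)$. The only cosmetic difference is that you package the graded-piece computation as the projection formula for $\pi_{0*}\mathcal{O}_{Y_0}=\bigoplus_k\mathcal{O}_X(kL)$, whereas the paper writes down the explicit map $\sum \lambda_i y^i\prod_{j<r}t_j^{a_j}\mapsto \sum \lambda_i t_r^{mi}\prod_{j<r}t_j^{a_j}$ and checks injectivity and image by hand; your final paragraph on embedding both rings in a common Laurent overring to verify the multiplicative structure is exactly the paper's argument.
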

\begin{proof}
By assumption, $L$ is nontorsion.  $\text{Cl}(Y_0)\cong \text{Cl}(X')$ is given by the quotient of $\text{Cl}(Y)$ by the subgroup generated by $E_0$ and $E_\infty$, and the only classes in $\pi_0^*\text{Cl}(X)$ of this form are multiples of $L$.  This proves the first statement.

For the second statement we must write down a map
\[
\alpha: R=\text{Cox}(X';\pi_0^*D_1,\ldots \pi_0^*D_{r-1})\to \text{Cox}(X;D_1,\ldots D_r)
\]
Note that because of the grading we need only define $\alpha$ on homogeneous elements of $R$.  As before, $K(X',y)=K(X)$, where $y$ has a zero of order $m$ along $\pi_0^*D_r$ and no other poles or zeros.  Choose $U\subset X$ so that rational functions are the ratios of regular functions on $U$.  

A homogeneous element of $R$ has the form $\frac{f(U,y)}{g(U,y)}\prod_{i<r}t_i^{a_i}$.  Since $\frac{f(U,y)}{g(U,y)}$ has no poles along horizontal divisors in $Y_0$, it is actually a Laurent polynomial in $y$.  So such an element of $R$ really has the form $\sum \lambda_i y^i \prod_{i<r}t_i^{a_i}$ where $\lambda_i\in K(X)$ are rational functions with poles of appropriate orders along $\pi_0^*D_1, \ldots \pi_0^*D_{r-1}$ and a pole of order at most $mi$ along $\pi_0^*D_r$.  Thus we get a well defined map $\alpha$ by sending $\sum \lambda_i y^i \prod_{i<r}t_i^{a_i}$ to $\sum \lambda_i s_r^{mi} \prod_{i<r}s_i^{a_i}$

This map $\alpha$ certainly injective since $t_r$ is algebraically independent of the other variables.  Also, given a homogeneous element $\lambda \prod_{i\leq r}s_i^{a_i}$ of $\text{Cox}(X;D_1,\ldots D_r)$ which is $\mathbb{Z}/m$ invariant, it is the image of the element $\lambda y^{a_i/m}\prod_{i<r}s_i^{a_i}$ of $r$.  So the image of the map is the $\mathbb{Z}/m$ invariant elements of $\text{Cox}(X;D_1,\ldots D_r)$.
\end{proof}

Before moving on it is instructive to consider a simple example.
\begin{example}
Let $X=\mathbb{P}^1\times\mathbb{P}^1$, and take $L$ to be the line bundle $(1,-1)$.  Then $X$ is a toric Fano variety, and $\text{Cox}(X)=k[a_0,a_1,b_0,b_1]$, where the $a_i$ have grading $(1,0)$ and the $b_i$ have grading $(0,1)$.  In our construction the variety $Y= \mathbb{P}(\mathcal{O}_X\oplus \mathcal{O}_X(1,-1))$.

$Y$ is also toric, and both boundary divisors are isomorphic to $X=\mathbb{P}^1\times\mathbb{P}^1$.  There is a contracting morphism which contracts each boundary divisor along a ruling down to a $\mathbb{P}^1$, and the image of this morphism is $\mathbb{P}^3$.  This is another toric Fano variety, whose Cox ring is again the polynomial ring in $4$ variables, this time graded by total degree.  In this case there is no cyclic cover, since $(1,-1)$ can be extended to a $\mathbb{Z}$ basis for the class group of $X$ and so $m=1$.

The variety $X'=\mathbb{P}^3$ is our expected small compactification of the $\mathbb{G}_m$ bundle on $X$ corresponding to $L$.  In fact the map from the open locus can be seen geometrically since projection from each line in $\mathbb{P}^3$ gives a map from the complement of that line to a $\mathbb{P}^1$.  Since the example is toric, we exhibit this contraction in terms of polytopes in Figure \ref{p1crossp1}.

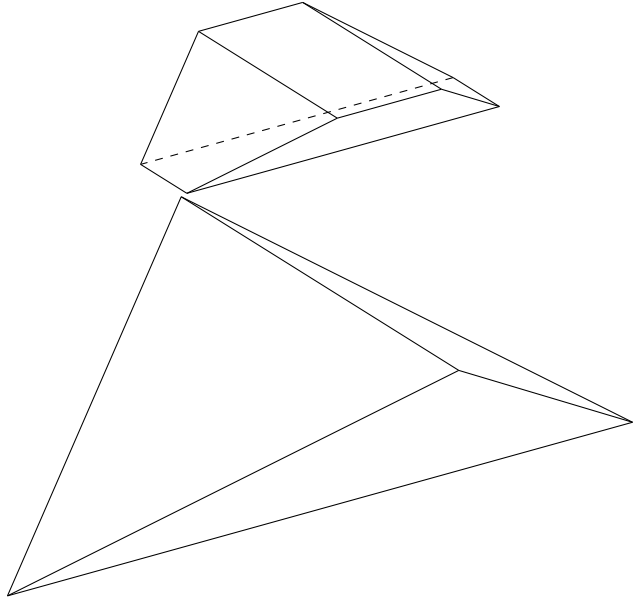
\begin{figure}\label{p1crossp1}
\begin{tikzpicture}[line join=bevel]
\coordinate (A1) at (-2,0,1);
\coordinate (A2) at (-1,0,2);
\coordinate (A3) at (2,0,-1);
\coordinate (A4) at (1,0,-2);
\coordinate (B1) at (-2,1,-1);
\coordinate (B2) at (1,1,2);
\coordinate (B3) at (2,1,1);
\coordinate (B4) at (-1,1,-2);
\draw (A1)--(A2);
\draw (A2)--(A3);
\draw (A3)--(A4);
\draw [dashed](A4)--(A1);
\draw (B1)--(B2);
\draw (B2)--(B3);
\draw (B3)--(B4);
\draw (B4)--(B1);
\draw (A1)--(B1);
\draw (A2)--(B2);
\draw (A3)--(B3);
\draw (A4)--(B4);
\end{tikzpicture}

\begin{tikzpicture}[line join=bevel,]
\coordinate (A1) at (-3,-1,3);
\coordinate (A2) at (3,-1,-3);
\coordinate (B1) at (3,2,3);
\coordinate (B2) at (-3,2,-3);
\draw (A1)--(A2);
\draw (B1)--(B2);
\draw (A1)--(B1);
\draw (A2)--(B2);
\draw (A1)--(B2);
\draw (A2)--(B1);
\end{tikzpicture}
\caption{The construction for $X=\mathbb{P}^1\times\mathbb{P}^1$, $L=(-1,1)$.  The $\mathbb{P}^1$ bundle $\mathbb{P}_X(\mathcal{O}_X(1,-1)\oplus \mathcal{O}_X)$ is contracted to $\mathbb{P}^3$.
}
\label{P3}
\end{figure}
While this example gives some flavor of the general construction, we cannot in general expect the variety $X'$ to be smooth, even when $X$ is.
\end{example}
\section{Singularities}\label{sing}

\begin{theorem}\label{induct}
Let $(X,\Delta)$ be a log Fano pair with $\rho_X>1$ which is $\Q$-factorial.  Then there exists a line bundle $L$ on $X$ and a small compactification $X'$ of the associated $\mathbb{G}_m$ bundle to $L$ with a $\Q$-divisor $\Delta'$ such that the pair $(X',\Delta')$ is also a $\Q$-factorial log Fano variety.
\end{theorem}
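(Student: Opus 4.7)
The plan is to build $X'$ as a birational modification of a $\mathbb{P}^1$-bundle over $X$, contracting its two boundary sections.

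I choose a non-torsion Cartier divisor $L$ on $X$ satisfying two conditions: (a) both $-(K_X + \Delta) + L$ and $-(K_X + \Delta) - L$ are ample, and (b) neither $L$ nor $-L$ is nef. Condition (a) is an open neighborhood of $0$ in $N^1(X)_\R$, and (b) is an open region disjoint from both $\overline{\mathrm{Nef}}(X)$ and $-\overline{\mathrm{Nef}}(X)$; since $\rho_X \geq 2$, the intersection of these two open sets is nonempty, and $\Q$-factoriality lets me scale $L$ to a Cartier multiple. With such $L$, form $Y = \P_X(\mathcal{O}_X \oplus \mathcal{O}_X(L))$ with projection $\pi$ and sections $E_0, E_\infty$ satisfying $E_0 - E_\infty \sim \pi^*L$, $E_0|_{E_0} = L$, and $E_\infty|_{E_\infty} = -L$. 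Adjunction yields $K_Y = \pi^*K_X - E_0 - E_\infty$, hence
\[
-(K_Y + \pi^*\Delta) = \pi^*(-(K_X + \Delta)) + E_0 + E_\infty,
\]
which has degree $2$ on $\pi$-fibers and restricts to ample classes on both sections by (a), so it is ample on $Y$. Because $\pi$ is smooth, $(Y, \pi^*\Delta)$ inherits klt from $(X, \Delta)$, giving a $\Q$-factorial log Fano pair, and in particular a Mori Dream Space.

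Condition (b) then guarantees that $E_0$ and $E_\infty$ are each covered by $(K_Y + \pi^*\Delta)$-negative curves $C$ with $E_0 \cdot C < 0$ (resp.\ $E_\infty \cdot C < 0$): for $C \subset E_0$ one computes $(K_Y + \pi^*\Delta) \cdot C = (K_X + \Delta - L) \cdot C < 0$ by (a), while $E_0 \cdot C = L \cdot C$ is negative for some $C$ by (b); symmetrically for $E_\infty$. Hence Mori extremal rays containing $E_0$ and $E_\infty$ in their exceptional loci exist. I run the $(K_Y + \pi^*\Delta)$-MMP (via BCHM), contracting such rays one at a time; after finitely many divisorial contractions and flips, both $E_0$ and $E_\infty$ are contracted to subvarieties of codimension $\geq 2$, at which point I stop. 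The resulting model $X'$ is $\Q$-factorial and klt by standard MMP, and $(X', \Delta')$ with $\Delta' = f_* \pi^*\Delta$ is log Fano because each step of a $(K+\Delta)$-MMP on a log Fano pair preserves log-Fano-ness of the intermediate model; moreover $\rho_{X'} = \rho_Y - 2 = \rho_X - 1$, and $Y_0 = Y \setminus (E_0 \cup E_\infty)$ is open in $X'$ with complement of codimension $\geq 2$.

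The main obstacle is confirming that the MMP really does contract $E_0$ and $E_\infty$ to codimension-$\geq 2$ subvarieties, rather than to divisors or as part of a Mori fiber structure. The negativity computations identify the correct extremal rays and show each has $E_0$ or $E_\infty$ in its exceptional locus, but ensuring that each such contraction is divisorial with image of codimension $\geq 2$ requires that the contracted curves sweep out only a proper subvariety of $E_0$ (resp.\ $E_\infty$). This is where condition (b) on $L$ is essential: it forces $L$ to change sign among curve classes, so the relevant extremal contractions restrict on $E_0 \cong X$ and $E_\infty \cong X$ to a non-trivial fibration collapsing at least one dimension. A careful analysis of $\overline{NE}(Y)$ using the MDS structure from Theorem \ref{MDS}, together with the BCHM theory, is what closes this gap.
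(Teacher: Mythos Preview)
Your overall strategy — compactify the $\mathbb{G}_m$-bundle inside a $\mathbb{P}^1$-bundle $Y$ and then contract the two sections $E_0,E_\infty$ — is the same as the paper's, but the way you choose $L$ has a genuine gap. Condition (a) cuts out a \emph{bounded} convex neighbourhood of $0$ in $N^1(X)_\R$, not a cone, so your sentence ``$\Q$-factoriality lets me scale $L$ to a Cartier multiple'' is illegitimate: scaling preserves (b) but destroys (a). And in fact (a) and (b) may admit no nonzero Cartier solution at all. Take $X=\mathrm{Bl}_p\mathbb{P}^2$, $\Delta=0$, with basis $H,E$ and $-K_X=3H-E$. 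The ample cone is $\{aH+bE : b<0,\ a+b>0\}$, so condition (a) becomes $|b|<1$ and $|a+b|<2$; the only nonzero integer points are $L=\pm H$, and $H$ is nef, so (b) fails. Thus for this most basic Fano with $\rho_X=2$ your construction cannot even begin.

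The paper sidesteps this by \emph{not} trying to make $(Y,\pi^*\Delta)$ log Fano. Instead it puts the sections into the boundary, working with $\Gamma=\pi^*\Delta+(1-\varepsilon)(E_0+E_\infty)$; then $-(K_Y+\Gamma)=\pi^*A+\varepsilon(E_0+E_\infty)$ is ample for every $L$, so no condition (a) is needed. The paper also replaces your (b) by the stronger requirement that neither $L$ nor $-L$ be \emph{effective}: ``not nef'' alone is too weak, since if $L$ happens to be big, contracting $E_0$-negative rays just runs an $L$-MMP on $E_0\cong X$ and terminates with the transform of $E_0$ still a divisor carrying a nef class — so you never reach a small compactification. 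Finally, rather than running an ad hoc MMP and hoping it lands on the right model, the paper names the target explicitly as $X'=\mathrm{Proj}\,\bigoplus_n H^0(Y,n(A+b(E_0+E_\infty)))$ for $b\gg 0$ and uses the Mori chamber decomposition of the MDS $Y$ to check that the induced map contracts exactly $E_0$ and $E_\infty$ and that $X'$ is $\Q$-factorial; only afterwards does it reinterpret the map as a sequence of $(K_Y+\Gamma)$-negative steps to deduce that $(X',\Delta')$ is klt. Your last paragraph essentially acknowledges that this control is missing from your argument.
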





Let $X$ be a Mori Dream Space.  The effective cone of $X$ has a decomposition into finitely many Mori chambers \cite{MR1786494}.  Therefore we may choose a Cartier divisor $L$ such that 
\begin{enumerate}
\item  Both $L$ and $-L$ are not effective.
\item  The intersections of the walls of the Mori chamber decomposition of the effective cone with the line segment connecting $-K_X$ and $L$ are transverse, and this segment only intersects one wall at a time, likewise for $-K_X$ and $-L$.

\end{enumerate}
The first condition will ensure that we will construct a small compactification; the second ensures that this compactification will be $\mathbb{Q}$-factorial.

Let $Y$ be the projectivized vector bundle $\mathbb{P}_X(\mathcal{O}_X\oplus \mathcal{O}_X(L))$, and $Y_0$ the complement of $E_0\cup E_\infty$ in $Y$.  Then by \ref{MDS} $Y$ is also a Mori Dream Space.  Hence $Y$ has finitely many small $\Q$-factorial modifications and these correspond to chambers in the effective cone of $Y$.  To understand these chambers, we need to understand divisors on $Y$, and in particular we need an ample divisor on $Y$.

There is the map $\pi:Y\to X$ which exhibits $Y$ as a $\mathbb{P}^1$ bundle over $X$.  Let $A$ be an ample line bundle on $X$. The pullback $\pi^*A$ has positive top intersection with every subvariety of $Y$ which isn't the pullback of a variety on $X$.  The divisors $E_0$  and $E_\infty$ intersect the fibers positively hence by the Nakai-Moishezon criterion \cite[1.2.19]{MR2095471} $\pi^*A+\varepsilon E_0+\varepsilon E_\infty$ is ample for sufficiently small $\varepsilon$.  In the log Fano case we will be taking $A=-(K_X+\Delta)$.

The variety $X$ is normal, so $Y$ is smooth in codimension $2$.  Hence divisors on $Y$ are Cartier in codimension $2$, and we can use the adjuction formula\cite[Rmk 5.47]{MR1658959}.  By adjunction, $K_Y=\pi^*K_X-E_0-E_\infty$.  In the case where $(X,\Delta)$ is log Fano, we will see that there is a divisor $\Gamma$ on $Y$ such that $(Y,\Gamma)$ is log Fano, but even if $X$ is strictly Fano, $Y$ may not be.

Next we will construct $X'$.  By hypothesis, neither $L$ nor $-L$ are effective.  Since $A$ is ample, there are positive rational numbers $a_+$ and $a_-$ such that $A+a_+L$ and $A+a_-L$ lie on the boundary of the effective cone.  Choose $b$ greater than both $a_+$ and $a_-$.  Set $H=A+b(E_0+E_\infty)$. Let $X'=\text{Proj}\bigoplus H^0(Y,\mathcal{O}(nH))$.  There is an induced rational map $f:Y\to X'$ which is regular away from the base locus of $H$.  Since $-pi^*A+\varepsilon E_0$ is ample, the map $f$ is regular and an isomorphism away from the divisors $E_0$ and $E_\infty$.  Thus $X'$ is a compactification of $Y_0$, which is the geometric realization of the $\mathbb{G}_m$ bundle $L$.  We define the divisor $\Delta'$ as the strict transform of $\pi^*\Delta$.

The divisors $E_0$ and $E_\infty$ are the exceptional loci for the rational map $f$.  Since $b$ was larger than both $a_+$ and $a_-$, both divisors are in the stable base locus, and when the base components are removed, the linear series $H$ is not big on either divisor.  Thus the images of $E_0$ and $E_\infty$ are of strictly smaller dimension, so $X'$ is a small compactification of $Y_0$.

\begin{proposition}\label{normality}
$X'$ is normal and $\Q$-factorial.
\end{proposition}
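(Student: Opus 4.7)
The plan is to realize $X'$ as the ample model on $Y$ of the big divisor $H$, and exploit the Mori Dream Space structure of $Y$ provided by Theorem \ref{MDS}. For normality, the section ring $R=\bigoplus_n H^0(Y,\mathcal{O}_Y(nH))$ is finitely generated by the MDS property, and since $Y$ is normal, $R$ is an integrally closed graded domain by standard section-ring considerations; hence $X'=\text{Proj}\,R$ is normal. The same conclusion can be verified geometrically: $X'$ contains $Y_0$ as an open subset whose complement has codimension at least two (as $H$ fails to be big on $E_0$ and on $E_\infty$), giving smoothness in codimension one, and the $S_2$ condition is inherited from the normal variety $Y$.

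For $\Q$-factoriality, I would run the $H$-MMP on $Y$ along a rational line segment in $N^1(Y)_\R$ from an ample class to $[H]$. Because $Y$ is a MDS, this segment meets only finitely many walls of the Mori chamber decomposition of $Y$. These walls come in two flavors: walls pulled back from $N^1(X)_\R$ via $\pi^*$, corresponding to wall-crossings in the MMP on $X$, and the two walls in the $\mathrm{span}(E_0,E_\infty)$ direction corresponding to the divisorial contractions of $E_0$ and $E_\infty$. The two transversality assumptions on $L$ — that the segments from $-K_X$ to $\pm L$ cross walls of $X$'s chamber decomposition transversely and one at a time — combined with the choice $b>\max(a_+,a_-)$, ensure that along our path in $N^1(Y)_\R$ walls are likewise crossed transversely, one at a time. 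Each crossing is therefore an elementary small $\Q$-factorial modification (flip or flop) or an elementary divisorial contraction of a single extremal ray, and both operations preserve $\Q$-factoriality. The final model $X'$ is therefore $\Q$-factorial.

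The main technical obstacle I anticipate is justifying the description of the Mori chamber decomposition of $N^1(Y)_\R$ near our path as coming from the walls on $X$ (pulled back by $\pi^*$) together with the two new walls from contracting $E_0$ and $E_\infty$, with no unexpected ``mixed'' walls arising. This reduces to analyzing extremal rays of $Y$: away from fibers of $\pi$, these should come from extremal rays of $X$, so that the Mori chambers of $Y$ in the relevant region inherit a product-like structure over those of $X$. Once this structural description is in place, the transversality on $X$ transfers to transversality on $Y$ and the MMP argument delivers $\Q$-factoriality as above.
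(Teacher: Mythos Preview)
Your overall strategy matches the paper's: reduce normality and $\Q$-factoriality to showing that $H$ lies in the interior of a Mori chamber of $Y$ (Hu--Keel, Prop.~1.11), and verify this by connecting an ample class to $H$ via a path that crosses chamber walls transversely. Your separate normality argument via the section ring is correct but unnecessary once this is established.

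The gap you yourself flag is real, and the paper resolves it with a device you are missing. Rather than a single straight segment, the paper uses a \emph{two-segment} path, varying only the coefficient of $E_0$ on one leg and only that of $E_\infty$ on the other. Consider the leg on which only the $E_\infty$-coefficient increases. Any curve class on which the moving divisor becomes zero must meet (the strict transform of) $E_\infty$ negatively, hence lies inside that strict transform. Restricting $\pi^*A+cE_\infty$ to $E_\infty\cong X$ gives $A-cL$ (since $E_\infty|_{E_\infty}=-L$ and $E_0\cap E_\infty=\emptyset$), so the wall-crossings along this leg correspond precisely to those along the segment from $A$ toward $-L$ in $N^1(X)$, where the transversality hypothesis on $L$ applies directly and guarantees one wall at a time. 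Once $E_\infty$ is contracted it lies in the stable base locus and further increasing its coefficient crosses no walls; since $b>a_-$, the leg ends strictly inside the resulting chamber. The other leg is handled identically with $E_0$ in place of $E_\infty$.

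Your proposed ``product-like'' description of the chamber decomposition of $Y$ (walls of $X$ pulled back by $\pi^*$, plus two extra walls for $E_0,E_\infty$) is not what the paper uses and would be hard to justify as stated: along a straight segment moving both boundary coefficients at once, a contracted curve need not sit inside a single boundary divisor, so there is no clean reduction to $N^1(X)$. The two-segment path sidesteps this completely, and that is the missing idea in your proposal.
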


\begin{proof}
It is equivalent to show that the divisor $H$ lies in the interior of a Mori chamber of $Y$\cite[Proposition 1.11]{MR1786494}.

To show this, we will construct a path in $N^1(Y)$ which connects $H$ with the ample divisor $-\pi^*(K_X+\Delta)+\varepsilon E_0$ and intersects the walls of the Mori decomposition in a finite set.  The path will consist of two line segments:

First, connect $H=\pi^*A+bE_0+bE_\infty$ to $\pi^*A+bE_0+\varepsilon E_\infty$ with a line segment.
Then connect $\pi^*A+bE_0$ to $\pi^*A+\varepsilon E_0+\varepsilon E_\infty$ with another line segment.

We will need to analyze what it means geometrically to cross a wall between chambers.  Say we start in the chamber corresponding to the variety $Z$. Each wall of this chamber corresponds to a curve in the nef cone of $Z$.  By \cite[Proposition 1.11]{MR1786494}, there are two things that may happen.  If the curve comes in a family which covers a divisor, that divisor will be contracted, and the result is the variety of the other chamber.

If the curve does not cover a divisor, then contracting the curve yields a variety which is not $\Q$-factorial.  The solution to this difficulty is an operation called a $D$ flip, which is a small modification which creates a new $\Q$-factorial variety, which will be the variety on the other side of the wall.  Specifically, let $g:Z\to W$ be the contracting morphism, and $D$ a $\Q$-Cartier divisor such that $-D$ is $g$-ample.  Then a $D$-flip is a map $g:Z'\to W$ where $Z'$ is a small modification of $Z$, and the strict transform of $D$ in $Z'$ is $\Q$-Cartier and $g'$-ample.  This is always unique when it exists, and will exist when $Z$ is a Mori Dream Space.  See \cite{MR1786494},\cite{MR1658959} for details.

For our purposes, what is important is that when our path hits a wall this corresponds to the divisor becoming trivial on some curve class.  Consider without loss of generality the part of the path where we are adding $E_\infty$.  Then  up until $E_\infty$ is contracted, the curve in question must be contained in the strict transform of $E_\infty$.  Once $E_\infty$ is contracted, $E_\infty$ is a component in the base locus so increasing the coefficient of $E_\infty$ will not change the rational map, and so we will not cross anymore walls.

Now the strict transform of $E_\infty$ is the image of $E_\infty$ under the induced rational map, and this is the birational model of $X$ given by $\text{Proj}\bigoplus_n H^0(X,\mathcal{O}(nA-naL))$.  Since $L$ was chosen so that as $a$ ranges from $0$ to $a_-$ only one curve becomes negative with respect to $A-aL$ at a time, the same is true along the path in $N^1(Y)$.

Since the path began in the interior of a chamber and intersects chamber walls in only finitely many places, it must end in the interior of a chamber.
\end{proof}

For the rest of this section we will consider only the log Fano case:

\begin{proposition}\label{positivity}
When $(X,\Delta)$ is log Fano, the divisor $-(K_{X'}+\Delta')$ is ample on $X'$.
\end{proposition}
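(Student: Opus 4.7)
The plan is to show that the Weil divisor class $-(K_{X'}+\Delta')$ on $X'$ coincides with the pushforward $f_*H$, and that the latter is ample by the Mori chamber structure already exhibited in Proposition \ref{normality}. Throughout I will exploit that $X'$ is normal and $\Q$-factorial (so Weil and $\Q$-Cartier classes coincide) and that $Y_0\hookrightarrow X'$ has complement of codimension at least two, so divisor classes on $X'$ are determined by their restriction to $Y_0$.

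First I would carry out the adjunction computation on $Y$. Since $\pi\colon Y\to X$ is a $\mathbb{P}^1$-bundle with disjoint sections $E_0,E_\infty$, one has $K_Y=\pi^*K_X-E_0-E_\infty$, and writing $A=-(K_X+\Delta)$ this yields
\[
-(K_Y+\pi^*\Delta)=\pi^*A+(E_0+E_\infty),\qquad H=-(K_Y+\pi^*\Delta)+(b-1)(E_0+E_\infty).
\]
Next I would push forward along the rational map $f\colon Y\dashrightarrow X'$: on $Y_0$ the map $f$ is an isomorphism onto an open subset of $X'$, and by construction $E_0,E_\infty$ map to subsets of codimension $\geq 2$. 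Hence the term $(b-1)(E_0+E_\infty)$ disappears under $f_*$, while canonical divisors and strict transforms are compatible with open restriction in codimension one, so $f_*(K_Y+\pi^*\Delta)=K_{X'}+\Delta'$. This gives the identification $f_*H=-(K_{X'}+\Delta')$.

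For the ampleness conclusion I would combine Proposition \ref{normality}, which places $H$ in the interior of the Mori chamber of $Y$ corresponding to $X'$, with \cite[Proposition~1.11]{MR1786494}: the strict transform on the associated $\Q$-factorial model of any divisor in the interior of a Mori chamber is ample. Applied to $H$ and $X'$, this forces $f_*H$ to be ample on $X'$, and hence $-(K_{X'}+\Delta')$ is ample.

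The step I expect to be most delicate is making the pushforward bookkeeping precise: one must verify that the Proj construction $X'=\text{Proj}\bigoplus_n H^0(Y,\mathcal{O}(nH))$ genuinely realizes $f_*H$ as the ample polarization, and that the $(b-1)(E_0+E_\infty)$ contribution is absorbed into the base locus rather than surviving in the N\'eron--Severi class of $X'$. Both points follow from the transversal path-crossing argument in Proposition \ref{normality}, which forces $H$ to be strictly interior to its chamber; once that interiority is granted, the Hu--Keel dictionary closes the argument.
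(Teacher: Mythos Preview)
Your argument is correct and follows essentially the same line as the paper's: identify $-(K_{X'}+\Delta')$ with the strict transform of $-\pi^*(K_X+\Delta)$ via adjunction and the fact that $E_0,E_\infty$ are contracted, then conclude ampleness from the construction of $X'$ as $\text{Proj}$ of the section ring of $H$. The only cosmetic difference is that the paper appeals directly to the $\text{Proj}$ description for the ampleness step, whereas you route it through Proposition~\ref{normality} and the Hu--Keel chamber dictionary.
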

\begin{proof}
Since $E_0$ and $E_\infty$ are contracted by $f$, the strict transform of $-\pi^*K_X-\pi^*\Delta$ is equivalent to that of $-\pi^*K_X-\pi^*\Delta+a_+E_0+a_- E_\infty$ on $X'$ and hence ample.  By adjunction, $\pi_0^*K_X$ is the canonical divisor on $Y_0$  (where $\pi_0$ is the restriction of $\pi$ to $Y_0$).  But $X'$ is a small compactification of $Y_0$, so the closure of $\pi_0^*K_X$ in $X'$ is $K_{X'}$, and this is the strict transform of $\pi^*K_X$.  Likewise by definition $\Delta'$ is the strict transform of $\pi^*\Delta$.  Hence $-(K_{X'}+\Delta')$ is ample.
\end{proof}

We will soon show that the pair $(X',\Delta')$ has log terminal singularities.  We know that $X'$ is related to the variety $Y$ by a sequence of $D$-flips and divisorial contractions.  The contracted curves in every case are actually $(K_Y+\Gamma)$ negative, once we know that $(Y,\Gamma)$ is klt, the resulting log flips and contractions will also be klt.

\begin{lemma}\label{klt}
For $0<\varepsilon<1$ the pair $(Y,\pi^*\Delta+(1-\varepsilon)E_0+(1-\varepsilon)E_\infty)$ is klt.
\end{lemma}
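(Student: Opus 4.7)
The plan is to build an explicit log resolution of $(Y,B)$, with $B=\pi^*\Delta+(1-\varepsilon)E_0+(1-\varepsilon)E_\infty$, from a log resolution of $(X,\Delta)$, and then compute every discrepancy directly using the adjunction formula $K_Y=\pi^*K_X-E_0-E_\infty$ already recorded in the text.

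First I take a log resolution $\mu\colon\tilde{X}\to X$ of $(X,\Delta)$ and pull back the projective bundle to form $\tilde{Y}:=\mathbb{P}_{\tilde{X}}(\mu^*(\mathcal{O}_X\oplus\mathcal{O}_X(L)))$, with projection $\tilde\pi\colon\tilde{Y}\to\tilde{X}$ and induced birational morphism $\nu\colon\tilde{Y}\to Y$, so that $\tilde{Y}=\tilde{X}\times_X Y$. Then $\tilde{Y}$ is smooth as a $\mathbb{P}^1$-bundle over the smooth variety $\tilde{X}$, and the Cartesian square yields $\nu^*E_0=\tilde{E}_0$ and $\nu^*E_\infty=\tilde{E}_\infty$, where $\tilde{E}_0,\tilde{E}_\infty$ denote the $0$ and $\infty$ sections of $\tilde\pi$; the $\nu$-exceptional divisors are precisely the pullbacks $\tilde\pi^*E_i$ of the $\mu$-exceptional divisors $E_i$. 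Since the SNC configuration $\sum E_i+\mathrm{supp}\,\tilde\Delta$ on $\tilde{X}$ remains SNC after pullback by the smooth morphism $\tilde\pi$, and the disjoint smooth sections $\tilde{E}_0,\tilde{E}_\infty$ meet every vertical divisor transversely, $\nu$ is a log resolution of $(Y,B)$.

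Next I compute discrepancies. Write the klt hypothesis on $(X,\Delta)$ as $K_{\tilde{X}}+\tilde\Delta=\mu^*(K_X+\Delta)+\sum a_iE_i$ with all $a_i>-1$, and combine with $K_Y=\pi^*K_X-E_0-E_\infty$ and its analogue $K_{\tilde{Y}}=\tilde\pi^*K_{\tilde{X}}-\tilde{E}_0-\tilde{E}_\infty$. Substitution gives
\[
K_{\tilde{Y}}=\nu^*(K_Y+B)+\sum a_i\,\tilde\pi^*E_i-\tilde\pi^*\tilde\Delta-(1-\varepsilon)\tilde{E}_0-(1-\varepsilon)\tilde{E}_\infty.
\]
Each coefficient on the right is strictly greater than $-1$: those of the $\tilde\pi^*E_i$ are $a_i>-1$ by the klt hypothesis; those of the prime components of $\tilde\pi^*\tilde\Delta$ are the negatives of the corresponding coefficients of $\Delta$, which lie in $(-1,0]$ since $(X,\Delta)$ is klt; and those of $\tilde{E}_0,\tilde{E}_\infty$ are $\varepsilon-1>-1$ by the assumption $\varepsilon>0$. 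Hence $(Y,B)$ is klt.

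The only real obstacle is the book-keeping in the above computation: one must verify that $\nu^*(\pi^*\Delta)=\tilde\pi^*(\mu^*\Delta)$ with no unexpected exceptional contribution, and that the multiplicities $b_i$ of $\mu^*\Delta$ along the $E_i$ cancel against the analogous contributions in $K_{\tilde{X}}-\mu^*K_X$ so that only the true discrepancies $a_i$ survive. Both points are immediate from the Cartesian square $\tilde{Y}=\tilde{X}\times_X Y$ and the flatness of $\tilde\pi$, which is why the formula above takes such a clean form.
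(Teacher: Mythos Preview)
Your proof is correct and follows essentially the same approach as the paper: pull back the $\mathbb{P}^1$-bundle along a log resolution of $(X,\Delta)$ to obtain a log resolution of $(Y,B)$, then read off the discrepancies from the adjunction formula $K_Y=\pi^*K_X-E_0-E_\infty$. The only cosmetic difference is that the paper phrases the computation in terms of log discrepancies (adding $\sum F_i+\tilde E_0+\tilde E_\infty$ to the left-hand side), whereas you write the ordinary discrepancy formula directly; the content is identical.
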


\begin{proof}
Set $\Gamma=\pi^*\Delta+(1-\varepsilon)E_0+(1-\varepsilon)E_\infty$.  Recall that $Y=\mathbb{P}_X(\mathcal{O}_X\oplus\mathcal{O}_X(L))$.  Let $\mu:Z\to X$ be a log resolution of the pair $(X,\Delta)$.  Then we can construct a resolution of singularities of $Y$ by taking $W=\mathbb{P}_Z(\mathcal{O}_Z\oplus\mathcal{O}_Z(\mu^*L))$.  We claim the map $\nu: W\to Y$ is a log resolution of the pair $(Y,\Gamma)$.  Let $F_i$ be the exceptional divisors of $\mu$.  The exceptional divisors of $W$ are the pullbacks of those in $Z$ by the projection $\phi$.  Thus, they along with the strict transforms of the boundary divisors $\tilde{E}_0$ and $\tilde{E}_\infty$ intersect transversely.  The strict transforms $\tilde{E}_0$ and $\tilde{E}_\infty$ are smooth since they are each isomorphic to $Z$.
\[
\xymatrix{W\ar[r]^-{\nu}\ar[d]_-{\phi} & Y\ar[d]^-{\pi}
\\ Z\ar[r]^-{\mu} & X
}
\]
Now we must compare discrepancies.  The pair $(X,\Delta)$ is klt, so the log discrepancies $a_i$ are postive:
\[
K_Z+\sum F_i = \mu^*(K_X+\Delta)+a_iF_i
\]
\[
K_W=\phi^*K_Z-\tilde{E}_0-\tilde{E}_\infty
\]
\[
K_W+\sum\phi^*F_i+\tilde{E}_0+\tilde{E}_\infty=\phi^*\mu^*(K_X+\Delta)+a_i\phi^*F_i=\nu^*\pi^*(K_X+\Delta)+a_i\phi^*F_i
\]

But $\pi^*(K_X+\Delta)=K_Y+\Gamma+\varepsilon(E_0+E_\infty)$, so
\[
K_W+\sum\phi^*F_i+\tilde{E}_0+\tilde{E}_\infty=\nu^*(K_Y+\Gamma)+\varepsilon \nu^*E_0+\varepsilon \nu^*E_\infty+a_i\phi^*F_i
\]

The pullbacks $\nu^*E_0$ and $\nu^*E_\infty$ are effective, so all the log discrepancies are positive, and $(Y,\Gamma)$ is klt.
\end{proof}

Thus while $Y$ may not be strictly Fano, for sufficiently small positive $\varepsilon$ the pair $(Y,\Gamma)$ is log Fano, where $\Gamma=\pi^*\Delta+(1-\varepsilon)E_0+(1-\varepsilon)E_\infty$. Now we consider the singularities of $X'$.

\begin{proposition}\label{fanosing}
When $(X,\Delta)$ is log Fano the pair $(X',\Delta')$ is klt.
\end{proposition}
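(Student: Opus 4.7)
The plan is to transfer the klt property from $(Y,\Gamma)$, established in Lemma \ref{klt}, to $(X',\Delta')$ along the birational contraction $f:Y\dashrightarrow X'$ from the construction preceding Proposition \ref{normality}. First, I would identify $(X',\Delta')$ as the pushforward pair: the divisors $E_0$ and $E_\infty$ are $f$-exceptional since their images in $X'$ have strictly smaller dimension, and $\Delta'$ is defined as the strict transform of $\pi^*\Delta$, so $f_*\Gamma=\Delta'$ and $f_*(K_Y+\Gamma)=K_{X'}+\Delta'$. Moreover, the computation $-(K_Y+\Gamma)=\pi^*A+\varepsilon(E_0+E_\infty)$ combined with the Nakai--Moishezon argument preceding Proposition \ref{normality} shows this class is ample for small $\varepsilon$, so $(Y,\Gamma)$ is klt log Fano.

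Next I would realize $f$ as a finite composition of $(K_Y+\Gamma)$-negative divisorial contractions and log flips. Along the two-segment path in $N^1(Y)$ from Proposition \ref{normality}, which can be chosen to start at an ample class close to $-(K_Y+\Gamma)$, one crosses finitely many walls of the Mori chamber decomposition. At each wall some extremal ray $R$ becomes trivial for the moving divisor class, and because $f$ only contracts $E_0$ and $E_\infty$, the curves generating $R$ lie in the strict transforms of these boundary divisors. A direct intersection calculation along each segment then shows that $R$ has negative intersection with the appropriately pushed-forward $K+\Gamma$ at the moment of wall crossing, identifying each step as a $(K+\Gamma)$-MMP operation. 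By the standard preservation of klt singularities under $(K+\Gamma)$-negative divisorial contractions and log flips on klt pairs, iterating along the finite sequence of wall crossings yields klt of $(X',\Delta')$.

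The main obstacle is the intersection computation in the second step, but it can be bypassed by working on a common log resolution $p:W\to Y$ and $q:W\to X'$ and comparing discrepancies via the negativity lemma. Since $f$ is an isomorphism away from $E_0\cup E_\infty$, the divisor $p^*(K_Y+\Gamma)-q^*(K_{X'}+\Delta')$ is $q$-exceptional; its effectivity follows either from the $(K+\Gamma)$-non-positivity of $f$ or directly from the ampleness of $-(K_{X'}+\Delta')$ by a standard negativity argument. Then every log discrepancy in $(X',\Delta')$ is bounded below by the corresponding positive log discrepancy in $(Y,\Gamma)$ supplied by Lemma \ref{klt}, giving klt of $(X',\Delta')$.
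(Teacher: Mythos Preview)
Your primary approach---showing that $(Y,\Gamma)$ is klt log Fano and then factoring $f:Y\dashrightarrow X'$ as a sequence of $(K+\Gamma)$-negative flips and divisorial contractions, each preserving klt---is exactly the paper's argument. The only cosmetic difference is that the paper chooses a single straight-line path in $N^1(Y)$ from $-(K_Y+\Gamma)$ to $-(K_Y+\Gamma)+F$ with $F$ effective and $X'=\mathrm{Proj}\bigoplus H^0(n(-(K_Y+\Gamma)+F))$; along this segment the contracted curve at each wall is automatically negative on the strict transform of $F$ and hence positive on $-(K_{Z_i}+\Gamma_i)$, making the ``direct intersection calculation'' you allude to immediate.

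Your alternative via a common resolution and the negativity lemma is a genuinely different route not taken in the paper. It works: the difference $p^*(K_Y+\Gamma)-q^*(K_{X'}+\Delta')$ is $q$-exceptional because $f$ is an isomorphism off $E_0\cup E_\infty$ and these are contracted to codimension $\ge 2$ in $X'$; its negative is $q$-nef since $-(K_Y+\Gamma)$ is ample on $Y$ and $q^*(K_{X'}+\Delta')$ is $q$-trivial; so the negativity lemma gives effectivity and the discrepancy comparison follows. This bypasses the MMP factorization entirely and is arguably cleaner, though the paper's path-in-$N^1$ argument has the advantage of reusing the machinery already set up in Proposition~\ref{normality}.
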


\begin{proof}

We have seen already that $X'$ is normal and $\Q$-factorial, hence $K_{X'}+\Delta'$ is $\Q$-Cartier.  By lemma \ref{klt} the pair $(Y,\Gamma)$ is klt. 

Since $Y$ is a Mori Dream Space, the rational map $Y\dashrightarrow X'$ factors into a series of $D$-flips and divisorial contractions $Y=Z_0\dashrightarrow Z_1 \ldots \dashrightarrow Z_n= X'$ by moving along a path in $N^1(Y)$.  Since $X'$ is $\Q$-factorial, we can choose that path to be a straight line connecting $-(K_Y+\Gamma)$ to $-(K_Y+\Gamma)+F$, where $F$ is effective and satisfies $X'=\text{Proj}\bigoplus H^0(n(-(K_Y+\Gamma)+F))$.

 Let $\Gamma_i$ be the divisor given by the closure of the strict transform of $\Gamma$ in $Z_i$.  Then $\Gamma=\Gamma_0$, and $\Gamma_n=\Delta'$.  Moreover, in the following diagram, we have that $f_{i*}\Gamma_i=g_{i+1*}\Gamma_{i+1}$.
\[
\xymatrix{Z_i \ar[dr]_{f_i}\ar@{-->}^{\psi}[rr] & & Z_{i+1}\ar[dl]^{g_{i+1}}
\\& W & &
}
\]
Assume $(Z_i, \Gamma_i)$ is klt.  There is a single curve class $C$ contracted by $f_i$, and this curve must have negative intersection with the strict transform of $F$.  Thus $C$ is positive on $-(K_{Z_i}+\Gamma_i)$, as this is the strict transform of $-(K_Y+\Gamma)$, and therefore $\psi$ is either a $K_{Z_i}+\Gamma_i$ flip or divisorial contraction of an extremal curve.  In either case, the pair $(Z_{i+1}, \Gamma_i)$ is klt \cite[Cor 3.42, Cor 3.43]{MR1658959}.

Thus $(X', \Delta')$ is klt.
\end{proof}

\begin{proof}[Proof of Theorem \ref{induct}]
Construct $(X',\Delta')$ as above.  Then by \ref{normality} $X'$ is normal and $\Q$-factorial.  By \ref{positivity} $-(K_{X'}+\Delta')$ is ample, and by \ref{fanosing} the pair $(X',\Delta')$ is klt, hence $(X',\Delta')$ is a log Fano pair.
\end{proof}

\begin{example}
Let $X$ be the blowup of $\mathbb{P}^2$ at one point.  This is the Hirzebruch surface $\mathbb{F}_1$, a toric Fano variety.  The Picard group of $X$ is a free abelian group generated by $H$ and $E$ where $H$ is the pullback of the class of a hyperplane in $\mathbb{P}^2$, and $E$ is the exceptional divisor of the blowup.  In this case $K_X=-3H+E_1$, and one can check that $-K_X$ is ample.  We will take $2E-H$ as our divisor $L$.  In Figure \ref{F1} we show the birational transformations from $Y=\mathbb{P}(\mathcal{O}_X\oplus\mathcal{O}_X(L))$ to $X'$.  Note that the first of these is a small modification, a $D$-flip of a curve in one of the exceptional divisors.
\begin{figure}

\begin{tikzpicture}[line join=bevel]
\coordinate (A1) at (-3,0,1);
\coordinate (A2) at (3,0,1);
\coordinate (A3) at (4,0,0);
\coordinate (A4) at (-4,0,0);
\coordinate (B1) at (-1.5,1,1.5);
\coordinate (B2) at (1.5,1,1.5);
\coordinate (B3) at (3,1,0);
\coordinate (B4) at (-3,1,0);
\draw (A1)--(A2);
\draw (A2)--(A3);
\draw[dashed] (A3)--(A4);
\draw (A4)--(A1);
\draw (B1)--(B2);
\draw (B2)--(B3);
\draw (B3)--(B4);
\draw (B4)--(B1);
\draw (A1)--(B1);
\draw (A2)--(B2);
\draw (A3)--(B3);
\draw (A4)--(B4);
\end{tikzpicture}

\begin{tikzpicture}[line join=bevel]
\coordinate (A1) at (-3,0,1);
\coordinate (A2) at (3,0,1);
\coordinate (A3) at (4,0,0);
\coordinate (A4) at (-4,0,0);
\coordinate (B1) at (0,2,2);
\coordinate (C2) at (0,3,1);
\coordinate (C3) at (1,3,0);
\coordinate (C4) at (-1,3,0);
\draw (A1)--(A2);
\draw (A2)--(A3);
\draw[dashed] (A3)--(A4);
\draw (A4)--(A1);
\draw (A1)--(B1);
\draw (A2)--(B1);
\draw (B1)--(C2);
\draw (A3)--(C3);
\draw (A4)--(C4);
\draw (C2)--(C3);
\draw (C2)--(C4);
\draw (C3)--(C4);
\end{tikzpicture}

\begin{tikzpicture}[line join=bevel]
\coordinate (A1) at (-3,0,1);
\coordinate (A2) at (3,0,1);
\coordinate (A3) at (4,0,0);
\coordinate (A4) at (-4,0,0);
\coordinate (B1) at (0,2,2);
\coordinate (C2) at (0,4,0);
\draw (A1)--(A2);
\draw (A2)--(A3);
\draw[dashed] (A3)--(A4);
\draw (A4)--(A1);
\draw (A1)--(B1);
\draw (A2)--(B1);
\draw (B1)--(C2);
\draw (A3)--(C2);
\draw (A4)--(C2);
\end{tikzpicture}

\begin{tikzpicture}[line join=bevel]
\coordinate (A1) at (-5,-1,0);
\coordinate (A2) at (5,-1,0);
\coordinate (B1) at (0,2,2);
\coordinate (C2) at (0,4,0);
\draw (A1)--(A2);
\draw (A1)--(B1);
\draw (A2)--(B1);
\draw (B1)--(C2);
\draw (A1)--(C2);
\draw (A2)--(C2);
\end{tikzpicture}
\caption{A toric example of the construction of $X'$ from $X$.  Here $X$ is the Hirzebruch surface $\mathbb{F}_1$.  We start with a compactified $\mathbb{G}_m$ bundle over $X$, and gradually make birational modifications until both exceptional divisors are contracted.}
\label{F1}
\end{figure}
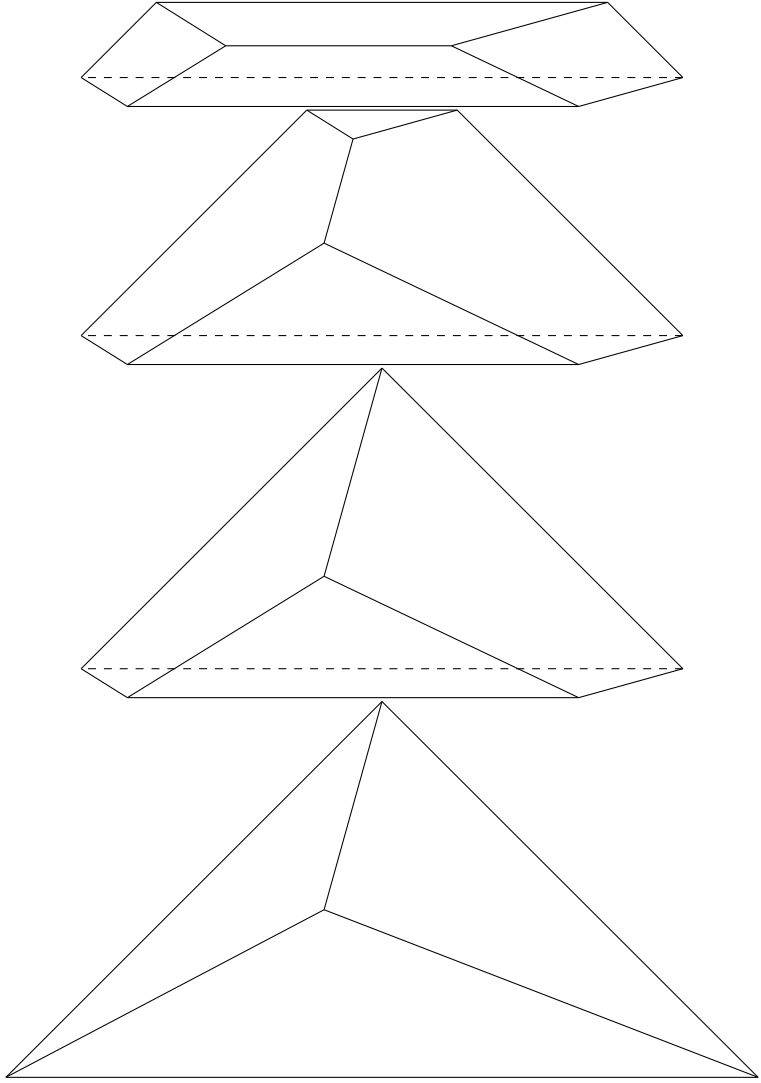
\end{example}

\section{The log Calabi-Yau case}\label{CY}

The theorems and proofs of this section are much the same as for section \ref{sing}, but for the case where the pair $(X,\Delta)$ is log Calabi-Yau.  

\begin{theorem}\label{inductcy}
Let $(X,\Delta)$ be a projective log Calabi-Yau pair with $\rho_X>1$ which is $\Q$-factorial.  Then there exists a line bundle $L$ on $X$ and a small compactification $X'$ of the associated $\mathbb{G}_m$ bundle to $L$ with a $\Q$-divisor $\Delta'$ such that the pair $(X',\Delta')$ is also a projective $\Q$-factorial log Calabi-Yau pair.
\end{theorem}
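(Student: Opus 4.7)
The plan is to follow the structure of the proof of Theorem \ref{induct}, substituting log Calabi-Yau inputs for the log Fano ones. First I would fix an ample divisor $A$ on $X$ to play the role that $-(K_X+\Delta)$ played in Section \ref{sing} (since here $-(K_X+\Delta)$ is numerically trivial), and choose a Cartier divisor $L$ such that neither $L$ nor $-L$ is effective and the segments joining $\pm L$ to $A$ meet the walls of the Mori chamber decomposition of $X$ transversely, one at a time. Then I would form $Y=\mathbb{P}_X(\mathcal{O}_X\oplus\mathcal{O}_X(L))$, a Mori Dream Space by Theorem \ref{MDS}, and set $X' = \text{Proj}\bigoplus_n H^0(Y,\mathcal{O}(nH))$ with $H = \pi^*A + b(E_0+E_\infty)$ for $b$ larger than the $a_\pm$ bounding the effective cone of $X$ along the $L$-direction.

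I would next check that Proposition \ref{normality} applies almost verbatim: its proof uses only the MDS structure of $Y$ and the transversality properties of the chosen line segments, both of which remain valid after substituting $A$ for $-(K_X+\Delta)$. This gives that $X'$ is a projective, normal, $\Q$-factorial small compactification of the $\mathbb{G}_m$-bundle $Y_0 = Y\setminus(E_0\cup E_\infty)$. Triviality of $K_{X'}+\Delta'$ is then immediate from smallness: by adjunction on the $\mathbb{G}_m$-bundle, $K_{Y_0}+\pi_0^*\Delta = \pi_0^*(K_X+\Delta)\sim_\Q 0$, and since $X'\setminus Y_0$ has codimension $\geq 2$, the closures $K_{X'}$ and $\Delta'$ inherit this triviality.

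The substantive step is to show $(X',\Delta')$ is lc. Set $\Gamma := \pi^*\Delta + E_0 + E_\infty$, so that $K_Y+\Gamma = \pi^*(K_X+\Delta)\sim_\Q 0$. The $\varepsilon=0$ version of the discrepancy calculation in Lemma \ref{klt}, applied to the log resolution $W = \mathbb{P}_Z(\mathcal{O}_Z\oplus\mathcal{O}_Z(\mu^*L))$, shows that the log discrepancies of $(Y,\Gamma)$ are either log discrepancies $a_i\geq 0$ inherited from the lc pair $(X,\Delta)$ or $0$ along $\tilde{E}_0$ and $\tilde{E}_\infty$; hence $(Y,\Gamma)$ is lc. Then I would factor $Y\dashrightarrow X'$ as a sequence $Y=Z_0\dashrightarrow\cdots\dashrightarrow Z_n = X'$ of small $\Q$-factorial modifications and divisorial contractions, obtained by moving along a path in $N^1(Y)$ from an ample class to $H$. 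Because $K_Y+\Gamma\sim_\Q 0$, every contracted curve at every step has zero intersection with the strict transform of $K+\Gamma$, so each step is crepant for the log pair $(Z_i,\Gamma_i)$; since crepant extremal modifications preserve lc, the pair $(X',\Delta')$ obtained by pushing $\Gamma$ forward (dropping the contracted components $E_0,E_\infty$) is lc.

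The main place I anticipate needing care is this final propagation step, specifically the divisorial contractions that eliminate $E_0$ or $E_\infty$. These contract components of $\Gamma$ with the maximal boundary coefficient $1$, so one cannot simply quote \cite[Cor.~3.42, Cor.~3.43]{MR1658959} as in the klt case of Proposition \ref{fanosing}; instead one must invoke the corresponding statements for lc pairs, which guarantee that a crepant extremal contraction of an lc pair remains lc after pushforward even at boundary coefficient $1$ (directly from the log-discrepancy formula $a(E;W,\Gamma_W)=1-c_E$ for the contracted divisor). With this in hand, $(X',\Delta')$ is a projective, $\Q$-factorial, log Calabi-Yau pair, completing the induction.
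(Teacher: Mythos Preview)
Your argument is correct and reaches the same conclusion, but the paper handles the final lc step differently and more simply. After establishing that $(Y,\Gamma)$ is lc with $\Gamma=\pi^*\Delta+E_0+E_\infty$ and that $K_{X'}+\Delta'\sim_{\Q}0$, the paper does not factor $Y\dashrightarrow X'$ into MMP steps at all. Instead it takes a common log resolution $\tilde X$ of $(Y,\Gamma)$ and $(X',\Delta')$ and observes that since both $K_Y+\Gamma$ and $K_{X'}+\Delta'$ are $\Q$-trivial, their pullbacks to $\tilde X$ coincide, so every log discrepancy of $(X',\Delta')$ equals the corresponding one of $(Y,\Gamma)$; lc then transfers immediately.

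What this buys the paper is that the delicate point you flagged---divisorial contractions eliminating boundary components with coefficient $1$, and whether the lc analogue of \cite[Cor.~3.42,~3.43]{MR1658959} applies---never arises. Your route through the MMP factorization works (each step is crepant because $K+\Gamma\sim_{\Q}0$, and crepant extremal modifications do preserve lc, exactly as you note via the discrepancy formula), but it requires justifying lc-preservation under crepant divisorial contractions, whereas the common-resolution argument reads off equality of all discrepancies in one stroke. Your approach has the virtue of parallelism with the log Fano proof of Proposition~\ref{fanosing}; the paper's has the virtue of being a one-line exploitation of numerical triviality.
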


The construction of $(X',\Delta')$ proceeds as in section \ref{sing}.  As above take $L$ generic such that neither $L$ nor $-L$ is effective, and set $Y=\mathbb{P}_X(\mathcal{O}_X\oplus \mathcal{O}_X(L))$, with $\Gamma=\pi^*\Delta'+E_0+E_\infty$.  The variety $X'$ is obtained by contracting the boundaries $E_0$ and $E_\infty$, and $\Delta'$ is the strict transform of $\pi^*\Delta$.

\begin{lemma}\label{projective}
$X'$ is projective.
\end{lemma}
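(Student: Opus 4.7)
The plan is to mirror the construction from Proposition \ref{normality}, but with the ample divisor $A$ chosen freely rather than forced to equal $-(K_X+\Delta)$. In the log Calabi-Yau setting $-(K_X+\Delta)$ is only numerically trivial, so it cannot itself provide ampleness; however, $X$ is projective by hypothesis, so I can simply fix any ample Cartier divisor $A$ on $X$ and use it in place of $-(K_X+\Delta)$.

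Concretely, since neither $L$ nor $-L$ is effective on $X$ and $A$ lies in the interior of the effective cone $\mathrm{Eff}(X)$, there exist positive rationals $a_+, a_-$ with $A + a_+L$ and $A - a_-L$ on the boundary of $\mathrm{Eff}(X)$. Choose $b > \max(a_+, a_-)$ and put $H = \pi^*A + b(E_0 + E_\infty)$ on $Y$. Because $X$ is a Mori Dream Space, Theorem \ref{MDS} gives that $Y = \mathbb{P}_X(\mathcal{O}_X \oplus \mathcal{O}_X(L))$ is also a Mori Dream Space, so the section ring $\bigoplus_{n \geq 0} H^0(Y, \mathcal{O}_Y(nH))$ is a finitely generated $\mathbb{C}$-algebra. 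I would then define
\[
X' := \operatorname{Proj} \bigoplus_{n\geq 0} H^0(Y,\mathcal{O}_Y(nH)),
\]
which is projective by construction.

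It remains to check that this $X'$ is indeed the small compactification of the $\mathbb{G}_m$-bundle $Y_0$ described at the start of the section, i.e.\ that the induced rational map $f: Y \dashrightarrow X'$ is an isomorphism over $Y_0$ and contracts each of $E_0$ and $E_\infty$ to loci of codimension $\geq 2$. This is exactly the content of the Mori chamber analysis in Proposition \ref{normality}: the path in $N^1(Y)$ from the ample class $\pi^*A + \varepsilon(E_0 + E_\infty)$ to $H$ crosses only finitely many walls, each corresponding to a curve contained in (a strict transform of) $E_0$ or $E_\infty$, and the choice $b > \max(a_+, a_-)$ forces both $E_0$ and $E_\infty$ into the stable base locus of $H$, hence both are contracted; since $\pm L$ are not effective, neither divisor can map birationally to its image, so the images have codimension at least two.

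The main point that needs verification is thus the projectivity itself, and it is essentially handed to us by the Mori Dream Space property together with the $\operatorname{Proj}$ description. The only substantive observation beyond the log Fano proof is that this whole argument nowhere used the Fano hypothesis on $(X,\Delta)$: every appeal was to (a) finite generation, which comes from $X$ being a Mori Dream Space and Theorem \ref{MDS}, and (b) ampleness of $A$ on $X$, which comes from projectivity of $X$. Hence the same construction produces a projective $X'$ in the log Calabi-Yau setting.
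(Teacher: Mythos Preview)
Your argument is correct and matches the paper's approach: the paper's proof is the single sentence ``By construction, $X'$ is $\operatorname{Proj}$ of a ring of sections of a line bundle on $Y$, hence $X'$ is projective.'' Your additional discussion---that in the log Calabi--Yau case one must take $A$ to be an arbitrary ample divisor rather than $-(K_X+\Delta)$, and that the small-compactification property then follows from the argument of Proposition~\ref{normality}---is accurate but goes beyond what the lemma asks, since the paper already set up the construction with a general ample $A$ in Section~\ref{sing} and refers back to it.
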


\begin{proof}
By construction, $X'$ is $\text{Proj}$ of a ring of sections of a line bundle on $Y$, hence $X'$ is projective.
\end{proof}

\begin{lemma}\label{lc}
The pair $(Y,\Gamma)$ is log Calabi-Yau.
\end{lemma}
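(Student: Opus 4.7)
The plan is to adapt the discrepancy calculation from Lemma \ref{klt}, setting the boundary coefficients of $E_0$ and $E_\infty$ to $1$ rather than $1-\varepsilon$. Two things need verification: that $K_Y+\Gamma$ is $\Q$-linearly trivial, and that $(Y,\Gamma)$ is log canonical.

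First I would establish numerical triviality. Applying adjunction in codimension $2$ — which is legitimate since $X$ is normal and so $Y$ is smooth in codimension $2$ — gives $K_Y=\pi^*K_X-E_0-E_\infty$, and hence
\[
K_Y+\Gamma \;=\; \pi^*K_X-E_0-E_\infty+\pi^*\Delta+E_0+E_\infty \;=\; \pi^*(K_X+\Delta),
\]
which is $\Q$-linearly trivial because $(X,\Delta)$ is log Calabi-Yau.

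For the log canonical property I would mimic the proof of Lemma \ref{klt} with $\varepsilon=0$. Let $\mu:Z\to X$ be a log resolution of $(X,\Delta)$ with exceptional divisors $F_i$ of log discrepancies $a_i\geq 0$, and set $W=\mathbb{P}_Z(\mathcal{O}_Z\oplus\mathcal{O}_Z(\mu^*L))$ with projection $\phi:W\to Z$ and induced map $\nu:W\to Y$. Since $\phi$ is a smooth $\mathbb{P}^1$-bundle, $W$ is smooth, the strict transforms $\tilde E_0,\tilde E_\infty$ are isomorphic to $Z$, and these together with the strict transform of $\pi^*\Delta$ and the exceptional divisors $\phi^*F_i$ cross normally, so $\nu$ is a log resolution of $(Y,\Gamma)$. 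Using $K_W=\phi^*K_Z-\tilde E_0-\tilde E_\infty$ together with $\nu^*(K_Y+\Gamma)=\phi^*\mu^*(K_X+\Delta)$, the same manipulation as in Lemma \ref{klt} yields
\[
K_W+\nu_*^{-1}\Gamma+\sum_i (1-a_i)\,\phi^*F_i \;=\; \nu^*(K_Y+\Gamma).
\]
Since every $a_i\geq 0$, every log discrepancy on $W$ is nonnegative, so $(Y,\Gamma)$ is log canonical.

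The only delicate point compared with the klt case is that $E_0$ and $E_\infty$ enter $\Gamma$ with coefficient exactly $1$, so $\tilde E_0$ and $\tilde E_\infty$ sit at log discrepancy $0$ rather than some $\varepsilon>0$. This is precisely the borderline allowed by log canonicity, so it causes no obstacle — the main step is just observing that the $\varepsilon$-contributions in Lemma \ref{klt}'s formula disappear, and the whole argument becomes a routine modification of that proof.
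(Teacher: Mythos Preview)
Your proposal is correct and follows essentially the same route as the paper: both verify $K_Y+\Gamma=\pi^*(K_X+\Delta)$ via adjunction, build the log resolution $W=\mathbb{P}_Z(\mathcal{O}_Z\oplus\mathcal{O}_Z(\mu^*L))$ over a log resolution $\mu:Z\to X$, and read off that the log discrepancies of the $\phi^*F_i$ coincide with those of the $F_i$ for $(X,\Delta)$, hence are nonnegative. Your write-up is arguably a bit more careful than the paper's in explicitly including $\nu_*^{-1}\Gamma$ (the strict transform of $\pi^*\Delta$) in the discrepancy formula.
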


\begin{proof}
By adjunction, $K_Y=\pi^*K_X-E_0-E_\infty$, so $K_Y+\Gamma$ is trivial.  Thus it remains to show that the pair $(Y,\Gamma)$ is lc.  
Let $\mu:Z\to X$ be a log resolution of the pair $(X,\Delta)$.  Then we can construct a resolution of singularities of $Y$ by taking $W=\mathbb{P}_Z(\mathcal{O}_Z\oplus\mathcal{O}_Z(\mu^*L))$.  We claim the map $\nu: W\to Y$ is a log resolution of the pair $(Y,\Gamma)$.  Let $F_i$ be the exceptional divisors of $\mu$.  The exceptional divisors of $W$ are the pullbacks of those in $Z$ by the projection $\phi$.  Thus, they along with the strict transforms of the boundary divisors $\tilde{E}_0$ and $\tilde{E}_\infty$ intersect transversely.  The strict transforms $\tilde{E}_0$ and $\tilde{E}_\infty$ are smooth since they are each isomorphic to $Z$.
\[
\xymatrix{W\ar[r]^-{\nu}\ar[d]_-{\phi} & Y\ar[d]^-{\pi}
\\ Z\ar[r]^-{\mu} & X
}
\]
Now we must compare discrepancies.  The pair $(X,\Delta)$ is lc, so the log discrepancies $a_i$ are nonnegative:
\[
K_Z+\sum F_i = \mu^*(K_X+\Delta)+a_iF_i
\]
\[
K_W=\phi^*K_Z-\tilde{E}_0-\tilde{E}_\infty
\]
\[
K_W+\sum\phi^*F_i+\tilde{E}_0+\tilde{E}_\infty=\phi^*\mu^*(K_X+\Delta)+a_i\phi^*F_i=\nu^*\pi^*(K_X+\Delta)+a_i\phi^*F_i
\]

But $\pi^*(K_X+\Delta)=K_Y+\Gamma$, so
\[
K_W+\sum\phi^*F_i+\tilde{E}_0+\tilde{E}_\infty=\nu^*(K_Y+\Gamma)+a_i\phi^*F_i
\]

All the log discrepancies are nonnegative, and so $(Y,\Gamma)$ is klt.
\end{proof}

\begin{proposition}\label{lcy}
The pair $(X',\Delta')$ is log Calabi-Yau.
\end{proposition}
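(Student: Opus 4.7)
The plan is to mirror the log Fano arguments of Propositions \ref{normality} and \ref{fanosing}, replacing ampleness of $-(K+\Gamma)$ with the triviality of $K+\Gamma$ supplied by Lemma \ref{lc}. First I would establish that $X'$ is normal and $\Q$-factorial by the Mori chamber method of Proposition \ref{normality}: pick any ample $A$ on $X$, set $H=\pi^{*}A+b(E_0+E_\infty)$ with $b$ exceeding the effectivity thresholds $a_\pm$ for $A \pm aL$, and run a piecewise linear path in $N^{1}(Y)$ from an ample class such as $\pi^{*}A+\varepsilon(E_0+E_\infty)$ to $H$. Genericity of $L$ makes the path cross Mori chamber walls transversely one at a time, so $H$ lies in the interior of a Mori chamber of $Y$; that neither $L$ nor $-L$ is effective forces $E_0$ and $E_\infty$ into the stable base locus for large $b$, so $X'$ really is a small compactification of $Y_0$.

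For the numerical triviality of $K_{X'}+\Delta'$, Lemma \ref{lc} gives $K_Y+\Gamma \sim 0$, which restricts to $K_{Y_0}+\pi_0^{*}\Delta \sim 0$ on $Y_0$; since $X'$ is a small compactification of $Y_0$, this equality of Weil divisor classes determines $K_{X'}+\Delta'$ and forces it to be trivial, and $\Q$-factoriality upgrades this to $\Q$-Cartier triviality. For the log canonical property the cleanest route is a crepant argument: both $K_Y+\Gamma$ and $K_{X'}+\Delta'$ are $\Q$-linearly trivial, so on a common log resolution $W$ (obtained by resolving the indeterminacy of the rational map out of the $\mathbb{P}^1$-bundle log resolution already built in Lemma \ref{lc}) the two pullbacks agree, and the discrepancy of every divisorial valuation computed against $(X',\Delta')$ equals its discrepancy against $(Y,\Gamma)$. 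Since $(Y,\Gamma)$ is lc by Lemma \ref{lc}, so is $(X',\Delta')$. An equivalent approach is to factor $Y \dashrightarrow X'$ through small $\Q$-factorial modifications (isomorphisms in codimension one, which preserve lc) and the divisorial contractions of $E_0$ and $E_\infty$ (which are $(K+\Gamma)$-trivial, hence preserve lc by the standard Koll\'ar--Mori preservation results).

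The main obstacle compared to the log Fano case is precisely the loss of the $(K+\Gamma)$-negativity that drove Proposition \ref{fanosing}: because $K_Y+\Gamma \sim 0$, the path from an ample class to $H$ contracts $(K+\Gamma)$-trivial extremal rays, and one cannot invoke klt/lc preservation for strictly negative extremal contractions as in the Fano proof. The crepant viewpoint above sidesteps this by reducing preservation of lc to an equality of discrepancies, which is automatic once the log Calabi-Yau property has been verified on both $(Y,\Gamma)$ and $(X',\Delta')$.
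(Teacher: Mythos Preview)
Your proposal is correct and matches the paper's own argument: the paper also deduces triviality of $K_{X'}+\Delta'$ from the small compactification of $Y_0$, then takes a common log resolution of $(Y,\Gamma)$ and $(X',\Delta')$ and uses that both log canonical divisors are trivial to equate all discrepancies, transferring lc from $(Y,\Gamma)$ to $(X',\Delta')$. Your extra remarks on $\Q$-factoriality and the alternative MMP factorization are sound but are handled in the paper by the separate citation of Proposition~\ref{normality} in the proof of Theorem~\ref{inductcy}, rather than inside this proposition.
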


\begin{proof}
The boundary of $Y_0$ has codimension $2$ in $X'$, and $K_{X'}+\Delta'$ is trivial on the open locus, so it must be trivial on all of $X'$.  

Let $\tilde{X}$ be a common log resolution of $(X',\Delta')$ and $(Y,\Gamma)$, with exceptional divisors $F_i$.
\[
\xymatrix{& {\tilde{X}}\ar[dr]^-{\beta}\ar[dl]_-{\alpha}
\\ X'& &Y\ar@{-->}[ll]_-{f}
}
\]

Both $K_Y+\Gamma$ and $K_{X'}+\Delta'$ are trivial, so their pullbacks are also trivial.
\[
K_{\tilde{X}}+\sum F_i = \alpha^*(K_Y+\Gamma)+\sum a_i F_i = \sum a_i F_i = \beta^*(K_{X'}+\Delta')+\sum a_i F_i
\]
The discrepancies are the same, and $(Y,\Gamma)$ is lc, so $(X',\Delta')$ is also lc.
\end{proof}

\begin{proof}[Proof of Thm \ref{inductcy}]
Follows from \ref{normality}, \ref{projective}, and \ref{lcy}
\end{proof}
\section{Abelian Covers}\label{final}
So far we have been able to construct new Fano varieties with a smaller Picard number at the cost of replacing the Cox ring with a cyclic quotient.  To recover the original ring we will have to repeatedly take certain cyclic covers, and show that the singularities do not get any worse.  Fortunately, by \cite[Prop 5.20 (iv)]{MR1658959}, if $X'\to X$ is a morphism of normal varieties which is {\'e}tale in codimension $2$, then $X'$ is log terminal iff $X$ is.

In the case where the class group of $X$ is a free abelian group, $\text{Cox}(X)$ is a UFD and hence integrally closed \cite[Prop 4.10]{MR1322960}.  In general there may be torsion in the class group however.
\begin{proposition}\label{normal}
Let $X$ be a normal scheme, and let $D_1 \ldots D_r$ generate the torsion free part of the class group of $X$.  Then $R=\text{Cox}(X; D_1,\ldots, D_r)$ is normal.
\end{proposition}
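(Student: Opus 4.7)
The plan is to realize $R$ as an intersection, over prime divisors $P$ of $X$, of visibly normal subrings $R_P$ of the Laurent polynomial ring $L := K(X)[t_1^{\pm}, \ldots, t_r^{\pm}]$, and then conclude via the standard fact that an intersection of normal domains with a common field of fractions is normal.

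For each prime divisor $P$ of $X$, let $v_P$ be the associated discrete valuation on $K(X)$ and set $e_i := v_P(D_i) \in \Z$. I would define
\[
R_P := \Bigl\{\, \sum_{\underline{a} \in \Z^r} f_{\underline{a}} t^{\underline{a}} \in L : v_P(f_{\underline{a}}) + \sum_{i=1}^r a_i e_i \geq 0 \text{ for every } \underline{a} \,\Bigr\}.
\]
Choosing a uniformizer $\pi$ for $v_P$ and setting $s_i := \pi^{-e_i} t_i$ identifies $R_P$ with $\mathcal{O}_{X,P}[s_1^{\pm}, \ldots, s_r^{\pm}]$, a Laurent polynomial ring over a DVR and hence a regular UFD; in particular $R_P$ is normal.

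Next I would check that $R = \bigcap_P R_P$ inside $L$. The inclusion $R \subseteq R_P$ is immediate from the definitions. For the reverse, the standard description of global sections of a Weil divisorial sheaf on a normal scheme,
\[
H^0\bigl(X, \mathcal{O}_X(\textstyle\sum a_i D_i)\bigr) = \{\, f \in K(X) : v_P(f) + \textstyle\sum_i a_i v_P(D_i) \geq 0 \text{ for every prime divisor } P \,\},
\]
shows that each monomial component of an element of $\bigcap_P R_P$ lies in the appropriate graded piece of $R$, so the whole Laurent polynomial is in $R$.

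To finish: if $h \in \text{Frac}(R)$ is integral over $R$, then because $R \subseteq R_P$ we have $h \in \text{Frac}(R_P)$ and $h$ is integral over the normal ring $R_P$, so $h \in R_P$ for every $P$, and therefore $h \in \bigcap_P R_P = R$. The only substantive input is the codimension-one characterization of $H^0(X, \mathcal{O}_X(D))$ used to write $R$ as the intersection, which follows from $X$ being regular in codimension one; after that the argument is essentially formal bookkeeping, and no hypothesis on finite generation of $\text{Cox}(X)$ is needed.
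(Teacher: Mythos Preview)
Your argument is correct and follows essentially the same idea as the paper's proof: both realize $R$ as the locus in $L=K(X)[t_1^{\pm},\ldots,t_r^{\pm}]$ where, for every prime divisor $P$, a suitably extended divisorial condition holds, and then deduce normality. The only cosmetic difference is that the paper extends each $v_P$ to a Gauss-type valuation $v'$ on $L$ and applies $v'$ directly to an integrality equation, whereas you make the monomial substitution $s_i=\pi^{-e_i}t_i$ to identify $R_P$ with $\mathcal{O}_{X,P}[s_1^{\pm},\ldots,s_r^{\pm}]$ and invoke normality of a Laurent polynomial ring over a DVR; your packaging avoids the explicit check that $v'$ is multiplicative.
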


\begin{proof}
Since $R$ does not depend on the choice of generators, merely the subgroup of $\text{Cl}(X)$ we may assume for simplicity the $D_i$ are irreducible and effective.
The Cox ring is a subring of $K(X)[t_i^{\pm}]$, which is integrally closed.  Let $S$ be the integral closure of $R$.  Then $S\subset K(X)[t_i^{\pm}]$.  Now, let $z$ be an element of $S$.  We have that $z^n=a_{n-1}z^{n-1}+\ldots a_0$ where the $a_i\in R$.  We wish to show that $z\in R$.  Expand $z$ as a Laurent polynomial $\sum (\lambda \prod t_i^{\alpha_i})$.  We must show for each term that $\lambda$ has at worst poles of the prescribed orders along the $D_i$.

Consider an irreducible effective Weil divisor $D$ on $X$.  Then $D$ induces a valuation $v$ on $K(X)$.  This valuation may be extended to $v'$ on $K(X)(t_i)$ as follows.  On a Laurent monomial $v'(f(X)t_i^{\alpha_i}=v(f(X))+\alpha_i$ if $D$ is one of the $D_i$, otherwise $v'(f(X)t_i^{\alpha_i}=v(f(X))$.  On a Laurent polynomial take the minimum of the valuations of each term.  

Now we need to confirm that the new valuation satisfies $v'(\lambda\mu)=v'(\lambda)+v'(\mu)$ for Laurent polynomials $\lambda,\mu$.  Let $\lambda'$ and $\mu'$ consist of the terms of $\lambda$ and $\mu$ which attain the lowest value of $v'$ respectively.  Then since $\lambda'\mu'\neq 0$, The lowest term of $\lambda\mu$ has valuation $v'(\lambda)+v'(\mu)$

On a rational function, take the difference of the valuation on the numerator and denominator.   
Now, by definition of $R$, if $v'(z)\geq 0$ for every $D$ on $X$, $z\in R$.  Apply $v'$ to both sides of $z^n=a_{n-1}z^{n-1}+\ldots a_0$.  Assuming $z$ is nonzero, we see that 

\[
nv'(z)\geq \text{min}_{0\leq i<n}(v'(a_i)+iv'(z))\geq min_{0\leq i<n}(iv'(z))
\]
Thus $v'(z)\geq 0$, so $z\in R$.

\end{proof}

The next step is to show that the covers described by $\ref{cover}$ are {\'e}tale in codimension $2$.  This is essentially a version of Reid's cyclic covering trick \cite[3.6]{MR927963}.    
\begin{lemma}\label{etale}
Assume $X$ is a Mori Dream Space.  Let $L_1 \ldots L_{\rho}$ be Cartier divisors which form a vector space basis for $\text{Pic}_{\mathbb{Q}}(X)$.  Take $R=\bigoplus H^0(X,\mathcal{O}(\sum a_1L_1 \ldots a_{\rho}L_{\rho}))$.  Let $D_i$ be a finite set of Weil divisors such that the subgroup of $\text{Cl}(X)$ generated by the $L_i$ is contained in the subgroup generated by the $D_i$, and such that this subgroup is torsion free.  Set $S=\bigoplus H^0(X,\mathcal{O}(\sum a_iD_i))$.  Then $S$ is a finite extension of $R$, and this extension is {\'e}tale in codimension $2$.
\end{lemma}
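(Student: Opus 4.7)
The plan is to realize the extension $R\subset S$ as a quotient by a finite Cartier-dual group scheme and then control the ramification using the torus-bundle picture of the Cox ring. Let $\Lambda_R\subset\Lambda_S\subset\text{Cl}(X)$ denote the subgroups generated by the $L_i$ and the $D_j$ respectively. Since the $L_i$ form a $\Q$-basis of $\text{Pic}_\Q(X)$ and $\Lambda_S$ is torsion free, both lattices have rank $\rho$, so $G:=\Lambda_S/\Lambda_R$ is a finite abelian group; its Cartier dual $G^*:=\Hom(G,\C^*)$ acts on $S$ through the $\Lambda_S$-grading and by inspection has invariants $S^{G^*}=R$.

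For the finite extension claim, I will use that $X$ is a Mori Dream Space (projective, $\Q$-factorial, with $\text{Pic}_\Q=N^1$), so the Hu--Keel Cox ring is a finitely generated $\C$-algebra, and any variant built from a finite-index torsion-free sublattice of $\text{Cl}(X)$ differs from it by a finite extension (by the same Pontryagin-dual argument). Hence $S$ is finitely generated over $\C$; because $G^*$ is a finite group acting on $S$ with invariants $R$, every element of $S$ satisfies the characteristic polynomial of its $G^*$-orbit over $R$, so $S$ is integral and thus finite over $R$.

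For étaleness in codimension $2$, I will exhibit a big open $U_S\subset\text{Spec}(S)$ on which $G^*$ acts freely. Let $X^\circ\subset X$ be the open locus over which every $D_i$ is Cartier; since $X$ is $\Q$-factorial, $X\setminus X^\circ$ has codimension $\geq 2$. Over $X^\circ$ the sheaf $\bigoplus_{D\in\Lambda_S}\mathcal{O}(D)$ is a sheaf of $\mathcal{O}_{X^\circ}$-algebras whose relative spectrum is a principal $T_S$-bundle $U_S\to X^\circ$, where $T_S:=\Hom(\Lambda_S,\C^*)$; likewise one obtains a principal $T_R$-bundle $U_R\to X^\circ$. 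The inclusion $\Lambda_R\hookrightarrow\Lambda_S$ dualizes to a short exact sequence $1\to G^*\to T_S\to T_R\to 1$, which identifies $U_S\to U_R$ as a principal $G^*$-bundle, in particular étale, and this locally models $\text{Spec}(S)\to\text{Spec}(R)$.

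It remains to bound $\text{Spec}(S)\setminus U_S$ by codimension $2$. On the $T_S$-semistable locus $U^{ss}\subset\text{Spec}(S)$ whose GIT quotient is $X$, the map to $X$ is equidimensional of relative dimension $\rho$, so the preimage of the codimension-$2$ set $X\setminus X^\circ$ has codimension $\geq 2$. The remaining locus is the irrelevant locus $Z=\text{Spec}(S)\setminus U^{ss}$; by Hu--Keel's theory of Mori Dream Spaces this $Z$ has codimension $\geq 2$ in the Cartier Cox ring, and because the extension to $S$ is finite, graded, and integral, the bound transfers to $\text{Spec}(S)$. This last transfer of the codimension estimate on the irrelevant locus from the Cartier (Hu--Keel) setting to the $\Lambda_S$-graded Weil-divisor setting is the one place I expect the argument to need real care. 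Once granted, $\text{Spec}(S)\to\text{Spec}(R)$ is a finite morphism étale on the dense open $U_S$ whose complement has codimension $\geq 2$, which is exactly étaleness in codimension $2$.
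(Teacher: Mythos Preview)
Your approach is the same as the paper's at the structural level: $R=S^{G^*}$ for the finite dual group $G^*=\Hom(\Lambda_S/\Lambda_R,\C^*)$, and \'etaleness is checked on the torus-bundle locus sitting over a big open of $X$. Two execution differences are worth noting. First, the paper works on $\text{Spec}\,R$ rather than $\text{Spec}\,S$: since the $L_i$ are Cartier, Hu--Keel's codimension-$2$ bound on the unstable locus applies directly, and the open set $U$ is taken inside $\text{Spec}\,R$ as the $\mathbb{G}_m^\rho$-bundle over the smooth locus of $X$. This completely sidesteps the transfer problem you flagged; alternatively you can repair your version by simply pulling $U$ back along the finite map $\text{Spec}\,S\to\text{Spec}\,R$, under which codimension is preserved. (Your equidimensionality claim for $U^{ss}\to X$ over the non-Cartier locus of the $D_i$ is not obviously justified, so the $R$-side route really is the cleaner one.) Second, in place of your abstract principal-$G^*$-bundle argument, the paper gives an explicit local description: writing $G\cong\bigoplus_j\Z/n_j$ with representative Weil divisors $F_j$, it chooses ample $A_j$ making $A_j+F_j$ and certain auxiliary divisors base-point-free on $X_{\mathrm{sm}}$, picks sections $x_j$ not vanishing at the given point, and shows that locally $S=R[x_1,\dots,x_k]$ with each $x_j^{n_j}$ a unit in $R$---a standard Kummer cover, \'etale in characteristic zero. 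Your short exact sequence $1\to G^*\to T_S\to T_R\to 1$ and the paper's explicit generators are two packagings of the same computation; yours is tidier, while the paper's makes the role of $\Q$-factoriality (via base-point-freeness on the smooth locus) more visible.
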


\begin{proof}
The fact that the extension is finite follows since $R$ is Noetherian and consists of the invariants of $S$ under the action of an abelian group.

We will find an open set $U\subset \text{Spec}R$ which is {\'e}tale.  Now, we can recover $X$ from $\text{Spec}R$ via a GIT quotient, as in \cite{MR1786494}.  By \cite[Lemma 2.7]{MR1786494}, the unstable locus has codimension $2$, and the remaining open set is a $\mathbb{G}_m^\rho$ bundle over $X$.  Since $X$ is normal, the singular locus of $X$ has codimension at least $2$, so we will take $U$ to be the preimage of the smooth locus of $X$ in the $\mathbb{G}_m^\rho$ bundle.  We must show that for any $u \in U$, the covering is {\'e}tale in a neighborhood of $u$.

Now, let $G$ be the group generated by the $D_i$ modulo the group generated by the $L_i$.  Since $X$ is $\mathbb{Q}$-factorial, $G$ is a finite abelian group, and so $G\cong \mathbb{Z}/n_1\oplus \ldots \oplus \mathbb{Z}/n_k$.  Set $g_1 \ldots g_k$ as a set of generators, and choose $F_1\ldots F_k$ Weil divisors which represent these.  

Let $b_{gi}$ be a set of generators for $S$ as an $R$ module, indexed so that $b_{gi}$ has the class $g$ in $G$ as a divisor.  We can think of each $b_{gi}$ as a section of $L_{gi}+\sum g_jF_j$, where $L_{gi}\in \langle L_1 \ldots L_{\rho} \rangle$.  Now, choose ample divisors $A_1 \ldots A_k$ on $X$ such that $-L_{gi}+\sum g_jA_j$ is a base point free divisor on the nonsingular locus of $X$ for each generator and so that $A_j+F_j$ is base point free.

Now, choose $a_{gj}$ sections of $-L_{gi}+\sum g_jA_j$ which do not pass through $u$, and $x_j$ sections of $A_j+F_j$ which don't pass through $u$.  In a neighborhood of $u$, we can assume the $a_{gj}$ and the $x_j^{n_j}$ are all units.
The element $b_{gj}a_{gj}\prod x_j^{n_j-g_j}$ is in $H^0(X,\sum (n_iA_i+n_iF_i))$, which is in $R$.  Thus $b_{gj}$ is in the module generated by the $x_i$.  So locally the extension is given by adjoining the $x_i$, and for each $x_i$, $x_i^{n_i}$ is a unit.  Thus the extension is {\'e}tale at $u$.
\end{proof}

\begin{proof}[Proof of Thm \ref{main}, \ref{maincy}]
The proof is by induction on the Picard rank $\rho_X$.  In the case $\rho=1$, by Theorem \ref{basecase}, the ring $\bigoplus H^0(X,\mathcal{O}(nL))$ has log terminal singularities.  Let $D$ be a generator of the class group of $X$.  The ring $\text{Cox}(X,D)=\bigoplus H^0(X,\mathcal{O}(nD))$ is normal and so by Lemma \ref{etale} also has log terminal singularities.

Now, assume that Theorem \ref{main} is true for varieties with $\rho=n$.  Given $(X,\Delta)$ log Fano $\mathbb{Q}$-factorial and log terminal with $\rho_X=n+1$, by Theorem \ref{induct}, there is a $\mathbb{G}_m$ bundle $Y_0 \subset \mathbb{P}_X(\mathcal{O}_X\oplus\mathcal{O}_X(L))$on $X$ with a compactification $X'$ which has $\rho_{X'}=n$, such that $X'$ is Fano and $\Q$-factorial and log terminal.  Choose $D_1\ldots D_r$ generators of the torsion free part of $\text{Cl}(X)$, where $D_r$ is a multiple of $L$.  We abuse notation to write $\pi_0^*D_i$ for the image of $\pi_0^*D_i$ under the isomorphism $\text{Cl}(X')\cong\text{Cl}(Y)$. By the inductive hypothesis, $\text{Cox}(X';\pi_0^*D_1\ldots\pi_0^*D_{r-1})$ has log terminal singularities.  By \ref{cover}, $\text{Cox}(X';\pi_0^*D_1\ldots\pi_0^*D_{r-1})$ consists of the invariants of $\text{Cox}(X;D_1\ldots D_r)$ under a cyclic group, so $\text{Cox}(X;D_1\ldots D_r)$ is a cyclic cover of $\text{Cox}(X';\pi_0^*D_1\ldots\pi_0^*D_{r-1})$.  There is a choice of $\mathbb{Q}$-basis $L_1 \ldots L_{n+1}$ for the Picard group of $X$ such that both rings are abelian covers of $H^0(X,\sum a_i L_i)$, and so by \ref{etale} these covers are {\'e}tale in codimension $2$.  Hence the map from $\text{Cox}(X';\pi_0^*D_1\ldots\pi_0^*D_{r-1})$ to $\text{Cox}(X;D_1\ldots D_r)$ is {\'e}tale in codimension $2$. Since $\text{Cox}(X;D_1\ldots D_r)$ is normal, the singularities of $\text{Cox}(X)$ are log terminal.

The log Calabi-Yau case is the same but uses $\ref{inductcy}$ instead of $\ref{induct}$.
\end{proof}
\bibliographystyle{amsalpha}      
\bibliography{algebraicgeometry}
\end{document}